\newtheorem{lem}{Lemma}[section]
\newtheorem{thm}[lem]{Theorem}
\newtheorem{prop}[lem]{Proposition}
\newtheorem{cor}[lem]{Corollary}
\newtheorem{conj}[lem]{Conjecture}
\theoremstyle{definition}
\newtheorem{remark}[lem]{Remark}
\newtheorem{definition}[lem]{Definition}
\DeclareMathAlphabet{\curly}{U}{rsfs}{m}{n}
\newcommand{\gon}{\operatorname{gon}}
\newcommand{\Q}{\mathbb{Q}}
\newcommand{\C}{\mathbb{C}}
\newcommand{\Z}{\mathbb{Z}}
\newcommand{\F}{\mathbb{F}}
\newcommand{\SL}{\operatorname{SL}}
\newcommand{\PP}{{\mathbb P}}
\mathchardef\mhyphen="2D
\title{Tetragonal modular quotients $X_0^{+d}(N)$}
\author{\sc Petar Orli\'c}
\address{Petar Orli\'c \\
University of Zagreb\\  
Bijeni\v{c}ka Cesta 30 \\
10000 Zagreb\\
Croatia}
\email{petar.orlic@math.hr}
\begin{document}
\begin{abstract}
    Let $N$ be a positive integer. For every $d\mid N$ such that $(d,N/d)=1$ there exists an Atkin-Lehner involution $w_d$ of the modular curve $X_0(N)$. In this paper we determine all quotient curves $X_0(N)/w_d$ whose $\Q$-gonality is equal to $4$ and all quotient curves $X_0(N)/w_d$ whose $\C$-gonality is equal to $4$.
\end{abstract}

\subjclass{11G18, 11G30, 14H30, 14H51}
\keywords{Modular curves, Gonality}

\thanks{The author was supported by the Croatian Science Foundation under the project no. IP-2022-10-5008.}

\maketitle

\section{Introduction}
Let $C$ be a smooth projective curve over a field $k$. The $k$-gonality of $C$, denoted by $\textup{gon}_k C$, is the least degree of a non-constant $k$-rational morphism $f:C\to\mathbb{P}^1$. For curves of genus $g\geq2$ there exists an upper bound for $\textup{gon}_k C$, linear in terms of the genus, see \Cref{poonen}.

When $C$ is a modular curve, there also exists a linear lower bound for the $\C$-gonality. This was first proved by Zograf \cite{Zograf1987}. The constant was afterwards improved by Abramovich \cite{abramovich} and later by Kim and Sarnak in Appendix 2 to \cite{Kim2002}.

The gonality of the modular curve $X_0(N)$ and its quotients has been extensively studied over the years. Ogg \cite{Ogg74} determined all hyperelliptic curves $X_0(N)$, Bars \cite{Bars99} determined all bielliptic curves $X_0(N)$, Hasegawa and Shimura determined all trigonal curves $X_0(N)$ over $\C$ and $\Q$, Jeon and Park determined all tetragonal curves $X_0(N)$ over $\C$, and Najman and Orlić \cite{NajmanOrlic22} determined all curves $X_0(N)$ with $\Q$-gonality equal to $4$, $5$, or $6$, and also determined the $\Q$ and $\C$-gonality for many other curves $X_0(N)$.

Regarding the gonality of the quotients of the curve $X_0(N)$, Furumoto and Hasegawa \cite{FurumotoHasegawa1999} determined all hyperelliptic quotients, and Hasegawa and Shimura \cite{HasegawaShimura1999,HasegawaShimura2000,HasegawaShimura2006} determined all trigonal quotients over $\C$. Bars, Gonzalez, and Kamel \cite{BARS2020380} determined all bielliptic quotients of $X_0(N)$ for squarefree levels $N$, Jeon \cite{JEON2018319} determined all bielliptic quotients $X_0^+(N)$, and Bars, Kamel, and Schweizer \cite{bars22biellipticquotients} determined all bielliptic quotients of $X_0(N)$ for non-squarefree levels $N$, completing the classification of bielliptic quotients.

The next logical step is to determine all tetragonal quotients of $X_0(N)$. All tetragonal quotients $X_0^+(N)$ over $\C$ and $\Q$ were determined in \cite{OrlicTetragonalX0+}. Here, we will do the same for the quotients $X_0^{+d}(N):=X_0(N)/w_d$ for $d\neq N$ (the notation $X_0^{+d}(N)$ was taken from \cite{HasegawaShimura1999}). We also determine all curves $X_0^{+d}(N)$ of genus $4$ that are trigonal over $\Q$, thus completing the classification of all $\Q$-trigonal curves $X_0^{+d}(N)$, mainly done by Hasegawa and Shimura \cite{HasegawaShimura1999}.

Our main results are the following theorems. For expository reasons, we also include the previously solved case $d=N$.

\begin{thm}\label{trigonalthm}
    The curve $X_0^{+d}(N):=X_0(N)/w_d$ is of genus $4$ and has $\Q$-gonality equal to $3$ if and only if
    \begin{align*}
        (N,d)\in\{&(66,33),(74,37),(84,84),(86,43),(88,88),(93,93),(108,4),(112,7),\\
        &(115,115),(116,116),(129,129),(135,135),(137,137),(147,147),\\
        &(155,155),(159,159),(215,215)\}.
    \end{align*}
\end{thm}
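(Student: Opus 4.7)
The plan is to reduce the problem to a finite enumeration via the genus constraint, then test the $\Q$-trigonality of each resulting candidate using its canonical model. Applying Riemann--Hurwitz to the degree-$2$ cover $X_0(N)\to X_0^{+d}(N)$ yields
\[g(X_0^{+d}(N)) = \frac{2g(X_0(N))+2-f(w_d)}{4},\]
where $f(w_d)$ is the number of fixed points of $w_d$ on $X_0(N)$, computable by the classical formula in terms of class numbers of orders in imaginary quadratic fields. Since $g(X_0(N))$ grows linearly in $N$ while $f(w_d)$ grows sublinearly, the equation $g(X_0^{+d}(N))=4$ has only finitely many solutions, which I would enumerate by direct search.

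A genus-$4$ curve has $\C$-gonality $2$ or $3$ according as it is hyperelliptic or not; the hyperelliptic $X_0^{+d}(N)$ are classified by Furumoto--Hasegawa \cite{FurumotoHasegawa1999}, so removing them from the list leaves only non-hyperelliptic genus-$4$ candidates, each trigonal over $\C$ and thus of $\Q$-gonality $3$ or $4$. To decide between these, I would compute the canonical model of each candidate in $\PP^3$ from $q$-expansions of a basis of the space $S_2(\Gamma_0(N))^{w_d}$ of $w_d$-invariant weight-$2$ cusp forms, and extract the unique defining quadric $Q$. The $g^1_3$'s on a non-hyperelliptic genus-$4$ curve correspond bijectively to the rulings of $Q$, so $X_0^{+d}(N)$ is $\Q$-trigonal iff $Q$ has a $\Q$-rational ruling, i.e.\ iff $Q$ has rank $3$, or $Q$ has rank $4$ with square discriminant in $\Q^*$. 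Otherwise the two rulings are Galois-conjugate and the $\Q$-gonality equals $4$.

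The main obstacle is the explicit computation of each canonical model and the associated quadric discriminant, which is carried out in a computer algebra system; care is needed to obtain $q$-expansions of $w_d$-invariant forms with sufficient precision to pin down the generators of the ideal of the canonical image. For each positive case, I would also aim to exhibit an explicit trigonal map (often induced by a natural degeneracy or quotient map to a modular curve of smaller level) as independent verification that the $\Q$-gonality is indeed $3$, and to handle the listed $d=N$ cases in a uniform way with the previously known results of Hasegawa--Shimura.
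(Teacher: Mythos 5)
Your proposal is correct in outline, but it decides the genus-$4$ trigonality question by a genuinely different method from the paper. The paper's proof of this theorem is essentially a two-line combination of \Cref{Fp_trigonal} and \Cref{trigonalmap} (plus the previously settled $d=N$ cases): for the curves that are \emph{not} $\Q$-trigonal it reduces modulo a well-chosen prime $p$ and checks, by exhausting the degree-$3$ effective $\F_p$-rational divisors and computing their Riemann--Roch spaces, that there is no degree-$3$ function even over $\F_p$, which forces $\gon_\Q\geq4$; for the trigonal ones it invokes \texttt{Magma}'s \texttt{Genus4GonalMap} to exhibit an explicit degree-$3$ map. You instead use the classical canonical-model criterion: a non-hyperelliptic genus-$4$ curve lies on a unique quadric $Q\subset\PP^3$, its $g^1_3$'s are the rulings of $Q$, and $\Q$-trigonality is equivalent to $\operatorname{rank}Q=3$, or $\operatorname{rank}Q=4$ with square discriminant. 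This is the Hasegawa--Shimura method, and it buys a clean if-and-only-if test in a single computation per curve, whereas the paper's $\F_p$ test is only a sufficient condition for non-trigonality (a priori it could be inconclusive if the reduction acquires a $g^1_3$) and its positive cases rely on a black-box routine. What the paper's route buys is uniformity with the rest of the article, where the same $\F_p$ Riemann--Roch machinery gives the tetragonal lower bounds, and it sidesteps the precision issues in certifying the canonical ideal from $q$-expansions, which you rightly flag as the delicate point of your approach. Two things you should make explicit: first, passing from a Galois-stable ruling (equivalently, a Galois-stable $g^1_3$) to a degree-$3$ map defined over $\Q$ uses the rational cusp, which kills the Brauer obstruction so that the class is represented by a rational divisor and the target conic is genuinely $\PP^1$; second, your Riemann--Hurwitz enumeration and your treatment of the $d=N$ cases rest on the same external inputs the paper uses (the fixed-point formulas, the Furumoto--Hasegawa hyperelliptic classification, and the prior results on $X_0^+(N)$), so the two proofs agree on everything except the core decision procedure.
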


\begin{thm}\label{tetragonalthm}
    The curve $X_0^{+d}(N):=X_0(N)/w_d$ has $\Q$-gonality equal to $4$ if and only if the pair $(N,d)$ is in the following table. In all cases when the genus of the curve $X_0^{+d}(N)$ is not $4$ (all genus $4$ cases are listed in \Cref{Fp_trigonal}), the $\C$-gonality is also equal to $4$.

    Additionally, for $N=243,271$, the curve $X_0^+(N)$ is tetragonal over $\C$, but not over $\Q$.

    \begin{center}
\hspace*{-1cm}\begin{tabular}{|c|c||c|c||c|c||c|c|}
\hline
    $N$ & $d$ & $N$ & $d$ & $N$ & $d$ & $N$ & $d$\\
    \hline
    $60$ & $3$, $5$ &
    $66$ & $2$, $3$, $22$ &
    $68$ & $17$ &
    $70$ & $2$, $5$, $7$, $70$\\
    $74$ & $2$ &
    $76$ & $4$ &
    $77$ & $11$ &
    $78$ & $2$, $3$, $6$, $13$, $78$\\
    $80$ & $5$ &
    $82$ & $2$, $82$ &
    $84$ & $3,4,7,12,21,28$ &
    $85$ & $5$, $17$\\
    $88$ & $8$, $11$ &
    $90$ & $2,5,9,10,18,45,90$ &
    $91$ & $7$ &
    $93$ & $3$, $31$ \\
    $96$ & $3$ &
    $98$ & $2$ &
    $99$ & $9$ &
    $100$ & $25$ \\
    $102$ & $2$, $3$, $17$, $51$, $102$ &
    $104$ & $8$, $13$ &
    $105$ & $3,5,7,15,21,35,105$ &
    $106$ & $2$,$53$,$106$ \\
    $108$ & $27$, $108$ &
    $110$ & $2,5,10,11,22,55,110$ &
    $111$ & $3$, $37$ &
    $112$ & $16$, $112$ \\
    $114$ & $2$, $3$, $19$, $38$, $114$ &
    $115$ & $5$, $23$ &
    $116$ & $4$, $29$ &
    $117$ & $9$, $117$ \\
    $118$ & $2$, $59$, $118$ &
    $120$ & $5$, $8$, $15$, $24$, $40$, $120$ &
    $122$ & $2$, $61$ &
    $123$ & $3$, $41$, $123$ \\
    $124$ & $4$, $31$, $124$ &
    $126$ & $2,7,9,14,18,63,126$ &
    $129$ & $3$, $43$ &
    $130$ & $2$, $10$, $13$, $26$, $65$ \\
    $132$ & $4$, $11$, $44$ &
    $133$ & $19$, $133$ &
    $134$ & $2$, $67$, $134$ &
    $135$ & $5$, $27$ \\
    $136$ & $8$, $17$, $136$ &
    $138$ & $3$, $6$, $23$, $69$, $138$ &
    $140$ & $4$, $35$, $140$ &
    $141$ & $3$, $47$, $141$ \\
    $142$ & $2$, $71$, $142$ &
    $143$ & $11$, $13$ &
    $144$ & $9$, $16$, $144$ &
    $145$ & $29$, $145$ \\
    $146$ & $2$, $73$ &
    $147$ & $49$ &
    $148$ & $4$, $148$ &
    $152$ & $152$ \\
    $153$ & $9$, $17$ &
    $155$ & $5$, $31$ &
    $156$ & $4$, $39$, $156$ &
    $157$ & $157$ \\
    $158$ & $2$, $79$, $158$ &
    $159$ & $3$, $53$ &
    $160$ & $32$, $160$ &
    $161$ & $7$, $23$, $161$ \\
    $163$ & $163$ &
    $165$ & $11$, $15$, $165$ &
    $166$ & $2$, $83$, $166$ &
    $168$ & $21$, $24$, $56$ \\
    $171$ & $9$, $19$, $171$ &
    $173$ & $173$ &
    $175$ & $175$ &
    $176$ & $11$, $16$, $176$ \\
    $177$ & $3$, $59$, $177$ &
    $183$ & $183$ &
    $184$ & $8$, $23$, $184$ &
    $185$ & $185$ \\
    $188$ & $4$, $47$, $188$ &
    $190$ & $5$, $10$, $19$, $95$ &
    $192$ & $192$ &
    $193$ & $193$ \\
    $194$ & $194$ &
    $195$ & $5$, $39$, $195$ &
    $196$ & $4$ &
    $197$ & $197$ \\
    $199$ & $199$ &
    $200$ & $200$ &
    $203$ & $203$ &
    $205$ & $5$, $41$, $205$ \\
    $206$ & $2$, $103$, $206$ &
    $207$ & $9$, $23$, $207$ &
    $209$ & $11$, $19$, $209$ &
    $211$ & $211$ \\
    $213$ & $3$, $71$, $213$ &
    $215$ & $5$, $43$ &
    $221$ & $13$, $17$, $221$ &
    $223$ & $223$ \\
    $224$ & $224$ &
    $229$ & $229$ &
    $241$ & $241$ &
    $251$ & $251$ \\
    $257$ & $257$ &
    $263$ & $263$ &
    $269$ & $269$ &
    $279$ & $9$, $31$, $279$ \\
    $281$ & $281$ &
    $284$ & $4$, $71$, $284$ &
    $287$ & $7$, $41$, $287$ &
    $299$ & $13$, $23$, $299$ \\
    $311$& $311$ &
    $359$ & $359$ & & & & \\
    
    \hline
\end{tabular}
\end{center}
\end{thm}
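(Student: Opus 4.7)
The plan is to combine the Kim--Sarnak lower bound with explicit constructions and reductions modulo primes. First, applying the Kim--Sarnak refinement (Appendix~2 of \cite{Kim2002}) of the Abramovich--Zograf bound to $\gon_\C X_0(N)$, I would produce a finite list of levels $N$ for which any quotient $X_0^{+d}(N)$ can possibly have $\C$-gonality at most $4$: since the projection $X_0(N)\to X_0^{+d}(N)$ has degree $2$, the inequality $\gon_\C X_0^{+d}(N)\leq 4$ forces $\gon_\C X_0(N)\leq 8$. For each such $N$ and each $d\mid N$ with $(d,N/d)=1$, this leaves a finite number of pairs $(N,d)$ to analyze individually. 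The case $d=N$ is already handled in \cite{OrlicTetragonalX0+} and included only for expository completeness; the new work is for $d\neq N$.

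Upper bounds are obtained by writing down explicit degree-$4$ maps. Natural sources include: (a) projection $X_0^{+d}(N)\to X_0^{+d'}(M)$ for $M\mid N$, composed with a low-gonality map on the target; (b) further Atkin-Lehner quotients $X_0^{+d}(N)\to X_0(N)/\diam{w_d,w_e}$ onto a rational or elliptic curve, yielding a tetragonal pencil by composition; (c) composition with a hyperelliptic or bielliptic map on $X_0^{+d}(N)$ itself in small-genus cases; (d) direct construction of a $g^1_4$ from $w_d$-invariant weight-$2$ cusp forms on the canonical model. The previously proved \Cref{trigonalthm} identifies precisely the genus-$4$ quotients that are $\Q$-trigonal, and these must be removed from the $\gon_\Q=4$ list even though their $\C$-gonality equals $4$.

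The key difficulty is the lower bound, i.e.\ ruling out $\gon\leq 3$. The main tool is the inequality $\gon_\Q X_0^{+d}(N)\geq \gon_{\F_p} X_0^{+d}(N)$ at primes $p\nmid N$ of good reduction, combined with the Castelnuovo--Severi inequality and bounds on the number of low-degree points of $X_0^{+d}(N)/\F_p$ computed from the Hecke eigenvalue data on the $w_d$-invariant subspace. To separate $\gon_\C$ from $\gon_\Q$ in the exceptional cases $N\in\{243,271\}$, I would analyze the Galois action on the finite set of degree-$4$ pencils on the canonical model, showing that a $\C$-rational tetragonal pencil exists but none is $\Q$-rational — the pencils are defined over some quadratic extension and come in Galois-conjugate pairs.

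The main obstacle I expect is the borderline range where the analytic bound narrowly permits tetragonality but no obvious modular or Atkin-Lehner projection produces a degree-$4$ map. For these pairs one must work with explicit canonical or singular models of $X_0^{+d}(N)$ built from $w_d$-invariant cusp forms, compute the relevant Riemann--Roch spaces, and search the Jacobian for effective divisor classes of degree $3$ (to exclude trigonality) or degree $4$ (to witness tetragonality). The other real challenge is sheer bookkeeping: the list of pairs $(N,d)$ in the theorem is large, and for each one the upper- and lower-bound arguments must align on the nose; ensuring completeness — that no admissible $(N,d)$ is missed — requires that the analytic bound, the reduction-mod-$p$ computations, and the explicit geometric constructions be executed in tandem across the entire candidate range.
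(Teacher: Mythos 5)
Your overall architecture (finiteness from an analytic gonality bound, upper bounds from Atkin--Lehner quotient maps and explicit Riemann--Roch searches, lower bounds from reduction mod $p$ and Castelnuovo--Severi) matches the paper's. But there is a genuine gap in your lower-bound strategy. You frame the key difficulty as ruling out $\gon\leq 3$; in fact that part is essentially free, since the hyperelliptic quotients (Furumoto--Hasegawa) and the $\C$-trigonal quotients (Hasegawa--Shimura) are already completely classified, and only the genus-$4$ curves need a separate $\F_p$ check for trigonality. The real work is proving $\gon\geq 5$ for the many pairs $(N,d)$ \emph{not} in the table, and here your toolkit is incomplete. Reduction mod $p$ only bounds the $\Q$-gonality, and Castelnuovo--Severi only applies when the hypothetical $g^1_4$ does not factor through the degree-$2$ quotient $X_0^{+d}(N)\to X_0(N)/\langle w_d,w_{d'}\rangle$; for roughly forty pairs of genus $5$--$9$ (those in \Cref{tablebetti}) neither argument closes the case over $\C$. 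The paper's essential extra ingredients, absent from your proposal, are the Tower theorem of Nguyen--Saito (which, for genus $\geq 10$, converts the nonexistence of a degree-$4$ map over $\Q$ into nonexistence over $\C$, so that the $\F_p$ computations suffice there) and the Green--Lazarsfeld theorem: computing that the graded Betti number $\beta_{2,2}$ of the canonical model vanishes rules out any $g^1_4$ over $\C$ for the low-genus stragglers. Without these you cannot establish the completeness of the $\C$-tetragonal classification (in particular the assertion that every $\C$-tetragonal $X_0^{+d}(N)$ with $d\neq N$ is $\Q$-tetragonal), and even the purely $\Q$-rational ``only if'' direction would rest on brute-force $\F_p$ divisor enumeration in genus up to $9$, which the paper already found infeasible for some quotients and had to circumvent by passing to smaller quotients (\Cref{Fp_gonality_quotient}).

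Two smaller points. First, bounding the candidate levels by Kim--Sarnak alone gives $N\leq 806$, a substantially larger search space than the paper's, which uses Ogg's $\F_{p^2}$-point bound $L_p(N)\leq 8(p^2+1)$ to cut to $N\leq 420$ and eliminate many intermediate levels; your approach works in principle but multiplies the bookkeeping you rightly worry about. Second, the exceptional cases $N=243,271$ concern $X_0^+(N)$, i.e.\ $d=N$, and are simply quoted from the earlier paper on $X_0^+(N)$; your proposed analysis of Galois-conjugate pencils is a plausible reconstruction but is not something this paper needs to redo.
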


Interestingly, as we can see, it turns out that all curves $X_0^{+d}(N)$ for $d\neq N$ that are $\C$-tetragonal are also $\Q$-tetragonal. However, this property does not hold for curves $X_0^+(N)$ since for $N=243,271$ the curve $X_0^+(N)$ is $\C$-tetragonal, but not $\Q$-tetragonal.

We use similar methods to the ones used in \cite{OrlicTetragonalX0+} to determine the tetragonal curves $X_0^{+d}(N)$. In \Cref{Fpsection}, we give lower bounds to the $\Q$-gonality by computing the gonality over finite fields. In \Cref{degree4mapsection}, we construct degree $4$ rational morphisms to $\mathbb{P}^1$, either via quotient maps to curves $X_0(N)/\left<w_d,w_{d'}\right>$ or using \texttt{Magma}. In \Cref{C-gonality_section}, we give lower bounds to the $\C$-gonality. For some curves we use the Castelnuovo-Severi inequality (\Cref{tm:CS}). We also use the graded Betti numbers to disprove the existence of degree $4$ morphisms to $\mathbb{P}^1$.

Note that for each level $N$ that is not a prime power, there are multiple quotients $X_0^{+d}(N)$ that need to be checked. For example, for $N=210$ which has four different prime factors, there are $14$ such quotients (we can exclude the curve $X_0^+(210)$ because it has already been solved). Therefore, it can be hard to track whether all quotients have been solved.

For the reader's convenience, at the end of the paper we put Tables \ref{tab:main1}, 
\ref{tab:main2}, \ref{tab:main3}. In these tables, for each level $N$, we give the links to all propositions used to solve the quotients at that level.

A lot of the results in this paper rely on \texttt{Magma} computations. The version of \texttt{Magma} used in the computations is V2.28-15, the latest version at the time of the writing of this paper. The codes that verify all computations in this paper can be found on
\begin{center}
    \url{https://github.com/orlic1/gonality_X0_quotients}.
\end{center}
All computations were performed on the Euler server at the Department of Mathematics, University of Zagreb with a Intel Xeon W-2133 CPU running at 3.60GHz and with
64 GB of RAM.

\section*{Conflicts of Interest}

The author has no relevant financial or non-financial interests to disclose.

\section*{Acknowledgements}

Many thanks to Nikola Adžaga and Philippe Michaud-Jacobs for permission to use their \texttt{Magma} codes as templates. Additionally, code and data associated to the paper \cite{Rouse_Sutherland_Zureick-Brown_2022} by Jeremy Rouse, Andrew V. Sutherland, and David Zureick-Brown was used in \Cref{Fp2points_prop}. Their code can be found on
\begin{center}
    \url{https://github.com/AndrewVSutherland/ell-adic-galois-images/tree/209c2f888669785151174f472ea2c9eafb6daaa9}.
\end{center}
The code for computations in \Cref{Fp2points_prop_quotient} uses functions from Francesc Bars's repository
\begin{center}
    \url{https://github.com/FrancescBars/Magma-functions-on-Quotient-Modular-Curves}.
\end{center}

I am also grateful to Filip Najman and Maarten Derickx for their helpful comments and suggestions. Finally, I would like to thank the anonymous referees for their reviews which have greatly improved the paper.

\section{Lower bounds on $\Q$-gonality via $\F_p$-gonality}\label{Fpsection}
For a curve $C$ defined over $\Q$ and $p$ a prime of good reduction of $C$, it is known that
$$\textup{gon}_{\F_p}(C)\leq\textup{gon}_\Q(C).$$
This is an important tool for determining the $\Q$-gonality because it is generally much easier to find the gonality over finite fields. When working with the modular curve $X_0(N)$ and its quotients, a very helpful result is Ogg's inequality \cite[Theorem 3.1]{Ogg74}, stated in simpler form in \cite[Lemma 3.1]{HasegawaShimura_trig}.

\begin{lem}\label{lemmaogg}
    Let $p$ be a prime not dividing $N$. Then the number of $\F_{p^2}$-points on the curve $X_0(N)$ is at least
    $$L_p(N):=\frac{p-1}{12}\psi(N)+2^{\omega(N)}.$$
    Here $\psi(N)$ is the index of the congruence subgroup $\Gamma_0(N)$, equal to $N\prod_{q\mid N}(1+\frac{1}{q})$, and $\omega(N)$ is the number of different prime divisors of $N$.
\end{lem}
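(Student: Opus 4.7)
The plan is to lower-bound $|X_0(N)(\F_{p^2})|$ by decomposing it into two contributions: $\F_{p^2}$-rational cusps and non-cuspidal moduli points attached to supersingular elliptic curves. Since $p \nmid N$, the curve $X_0(N)$ has good reduction at $p$, so this decomposition makes sense on the special fibre.

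For the cusp contribution, I would use the standard description of the cusps of $X_0(N)$ as $\Gamma_0(N)\backslash \PP^1(\Q)$, with Galois orbits of size $\varphi(\gcd(d,N/d))$ indexed by divisors $d\mid N$. Those $d$ with $\gcd(d,N/d)=1$ give singleton orbits and hence $\Q$-rational (therefore $\F_{p^2}$-rational) cusps, and the number of such $d$ equals exactly $2^{\omega(N)}$.

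For the supersingular contribution, a non-cuspidal $\F_{p^2}$-point of $X_0(N)$ corresponds to a Galois-invariant isomorphism class of pairs $(E, C)$ with $E$ an elliptic curve in characteristic $p$ and $C \subset E$ a cyclic subgroup of order $N$. For $E$ supersingular, $j(E) \in \F_{p^2}$ and the full endomorphism ring is defined over $\F_{p^2}$ after choosing an appropriate model; Frobenius then acts on $E[N]\cong(\Z/N\Z)^2$ as an element of $\End(E)\otimes\Z/N\Z$, and in fact one can arrange it to act as a scalar, so every one of the $\psi(N)$ cyclic subgroups of order $N$ is Frobenius-stable and yields an $\F_{p^2}$-rational moduli point. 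Combining this with the Eichler--Deuring mass formula
\[
\sum_{E \text{ supersingular}} \frac{1}{|\Aut(E)|} = \frac{p-1}{24},
\]
and using that $|\Aut(E)|\ge 2$ always, there are at least $\tfrac{p-1}{12}$ supersingular $j$-invariants, each contributing $\psi(N)$ moduli points, which gives the claimed $\tfrac{p-1}{12}\psi(N)$ lower bound.

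The main obstacle I anticipate is the careful bookkeeping at the exceptional supersingular $j$-invariants $j = 0, 1728$ (where $|\Aut(E)|\in\{4,6\}$ rather than $2$) and at the small primes $p=2,3$, together with verifying that pairs from distinct $j$-invariants yield distinct moduli points and that no over-counting occurs via automorphisms or twists. These exceptional cases only add rational points beyond the naive estimate, which is why the bound is phrased as an inequality rather than an equality; verifying non-negativity of the surplus in every case is the essence of Ogg's original argument in \cite{Ogg74}.
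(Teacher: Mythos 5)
The paper offers no proof of this lemma---it is quoted from Ogg \cite[Theorem 3.1]{Ogg74} (in the simplified form of Hasegawa--Shimura)---so the comparison is with Ogg's original argument, which is indeed the one you are reconstructing: bound the $\F_{p^2}$-points from below by the rational cusps plus the supersingular locus of the good reduction. Your cusp count (the $2^{\omega(N)}$ divisors $d\mid N$ with $\gcd(d,N/d)=1$ give $\Q$-rational cusps reducing to distinct $\F_{p^2}$-points) and your key observation that a supersingular curve has an $\F_{p^2}$-model on which Frobenius acts as the scalar $\pm p$ on $E[N]$, so that \emph{every} cyclic $N$-isogeny is $\F_{p^2}$-rational, are both correct and constitute the heart of the proof.

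There is, however, a genuine flaw in how you assemble the supersingular count. You assert that there are at least $\tfrac{p-1}{12}$ supersingular $j$-invariants, \emph{each contributing $\psi(N)$ moduli points}, and that the exceptional $j$-invariants $0,1728$ ``only add rational points beyond the naive estimate.'' The sign of that correction is backwards: over $j=0$ and $j=1728$ the map $X_0(N)\to X(1)$ is ramified and the fibre has \emph{strictly fewer} than $\psi(N)$ points, because distinct subgroups $C,C'$ related by an extra automorphism of $E$ define the same moduli point. For instance, for $N=13$ and $p=11$ the two supersingular values are $j\equiv 0$ and $j\equiv 1728$, whose fibres carry $6$ and $8$ points respectively, not $14$ each; the bound $14\geq \tfrac{10}{12}\cdot 14$ survives only because these are exactly the $j$-invariants that push the count of supersingular $j$'s above $\tfrac{p-1}{12}$. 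Your two estimates therefore commit compensating errors, and verifying that the compensation is exact (including at $p=2,3$) is the actual content of the step you defer. The clean repair is to apply the Eichler mass formula at level $N$ rather than at level $1$: the supersingular points of $X_0(N)$ in characteristic $p$ satisfy $\sum_{(E,C)} 1/\lvert\Aut(E,C)\rvert = \tfrac{p-1}{24}\psi(N)$, and since $-1\in\Aut(E,C)$ always, each summand is at most $1/2$, which yields at least $\tfrac{p-1}{12}\psi(N)$ such points in one stroke, with no case analysis at $j=0,1728$.
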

\begin{lem}\cite[Lemma 3.5]{NajmanOrlic22}\label{Fp2points}
    Let $C$ be a curve over $\Q$, $p$ a prime of good reduction for $C$, and $q$ a power of $p$. Suppose $\#C(\F_q)>d(q+1)$. Then $\textup{gon}_\Q(C)>d$.
\end{lem}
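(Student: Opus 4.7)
The plan is to prove the contrapositive: assuming $\gon_\Q(C)\leq d$, I will show $\#C(\F_q)\leq d(q+1)$, which contradicts the hypothesis.

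First, suppose there exists a non-constant $\Q$-rational morphism $f\colon C\to \PP^1$ of degree $d'\leq d$. Since $p$ is a prime of good reduction, $C$ admits a smooth proper model $\mathcal{C}$ over $\Z_{(p)}$, and $f$ extends to a morphism $\mathcal{C}\to \PP^1_{\Z_{(p)}}$ by the valuative criterion of properness. Reducing mod $p$ gives a morphism $\bar f\colon C_{\F_p}\to \PP^1_{\F_p}$, and because $C_{\F_p}$ is geometrically integral of the same genus as $C$ and $\bar f$ is non-constant, its degree equals $d'$ (flatness of the family ensures that the fiber degree is preserved by specialization).

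Next, I base change to $\F_q$ and count points. Every $\F_q$-point $x\in C(\F_q)$ maps under $\bar f$ to a point of $\PP^1(\F_q)$, and the geometric fiber over any point of $\PP^1(\overline{\F_q})$ has cardinality at most $d'$. Hence each fiber contains at most $d'$ $\F_q$-rational points, giving
\[
\#C(\F_q)\;\leq\; d'\cdot\#\PP^1(\F_q)\;=\;d'(q+1)\;\leq\;d(q+1),
\]
the desired contradiction.

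There is no real obstacle here: the argument is a standard application of the fact that good reduction preserves the degree of a morphism and that fiber cardinality is bounded by the degree. The only mild subtlety is verifying that the reduced map is still non-constant of the same degree, which follows from the smoothness of the model and the flatness of $\bar f$.
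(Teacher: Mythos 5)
Your overall strategy --- reduce the gonality map mod $p$ and then count $\F_q$-points fiber by fiber over $\PP^1(\F_q)$ --- is the standard one, and the counting step at the end is correct: a non-constant $\F_q$-morphism of degree $e$ to $\PP^1$ sends $C(\F_q)$ into $\PP^1(\F_q)$ with fibers of size at most $e$, giving $\#C(\F_q)\le e(q+1)$. The problem is the middle step, where you pass from $f$ to $\bar f$. Two claims there are unjustified, and one of them is the actual content of the lemma. First, the assertion that $\deg\bar f=d'$ is false in general: the rational map $\mathcal{C}\dashrightarrow\PP^1_{\Z_{(p)}}$ produced by the valuative criterion is only defined away from finitely many closed points of the special fiber, and resolving the indeterminacy can strictly drop the degree (e.g.\ $f(x)=x+p/x$ on $\PP^1_{\Z_{(p)}}$ has degree $2$ generically but reduces to $x\mapsto x$ of degree $1$). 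This does not hurt you, since $\deg\bar f\le d'$ suffices for the bound. Second, and more seriously, you simply assert that ``$\bar f$ is non-constant.'' This can fail: if $f=1+pg$ for some function $g$ of degree $d'$, then $f$ has degree $d'$ but its reduction is the constant $1$, the induced map $\mathcal{C}\to\PP^1_{\Z_{(p)}}$ contracts the special fiber to a point, and your fiber count gives nothing. The valuative criterion and ``flatness of the family'' cannot rescue this, because the map that contracts the special fiber is precisely not flat.

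The missing ingredient is exactly the inequality $\gon_{\F_p}(C)\le\gon_\Q(C)$ quoted at the start of \Cref{Fpsection}: one must show that \emph{some} non-constant $\F_p$-map of degree $\le d'$ exists on the special fiber, not that the naive reduction of a given $f$ works. The standard fixes are either to replace $f$ by a suitable fractional-linear transform $\frac{af+b}{cf+d}$ before reducing, or (cleaner) to avoid specializing the morphism altogether: take the closure $\mathcal{D}$ in $\mathcal{C}$ of the polar divisor of $f$, a relative effective divisor of degree $d'$, and use upper semicontinuity of $h^0$ to get $h^0(C_{\F_p},\mathcal{D}_{\F_p})\ge h^0(C,D)\ge 2$, which produces a non-constant function of degree $\le d'$ on $C_{\F_p}$. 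With that lemma in hand (or cited), your fiber count finishes the proof. As written, though, the argument has a genuine gap at its central step.
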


If the quotient curve $X_0^{+d}(N)$ is tetragonal, then we have a rational composition map $X_0(N)\to X_0^{+d}(N)\to\mathbb{P}^1$ of degree $8$. Therefore, by \Cref{Fp2points} we must have
\begin{align}\label{ogg}
    L_p(N)&\leq8(p^2+1)
\end{align}
for all primes $p\nmid N$. Also, notice that we can exclude all levels $N$ which are prime powers since in that case the only quotient of $X_0(N)$ is the curve $X_0^+(N)$ which has been solved in \cite{OrlicTetragonalX0+}. From now on, we will suppose that $N$ is not a prime power.

\begin{prop}
    For every $N>432$ that is not a prime power there exists a prime $p$ for which the inequality \ref{ogg} does not hold.
\end{prop}

\begin{proof}
    The proof is similar to the proof of \cite[Lemma 3.2]{HasegawaShimura_trig}. Since $N$ is not a prime power, we have $\omega(N)\geq2$. Now there are several cases:
    \begin{itemize}
        \item $2\nmid N, N>432$: take $p=2$; $\displaystyle\frac{p-1}{12}\psi(N)+2^{\omega(N)}\geq\frac{N}{12}+4>40=8(p^2+1)$.
        \item $2\mid N, 3\nmid N, N>304$: take $p=2$; $\displaystyle\frac{p-1}{12}\psi(N)+2^{\omega(N)}\geq\frac{N}{4}+4>80=8(p^2+1)$.
        \item $(2\cdot3)\mid N, 5\nmid N, N>306$: take $p=5$; $\displaystyle\frac{p-1}{12}\psi(N)+2^{\omega(N)}\geq\frac{2N}{3}+4>208=8(p^2+1)$.
        \item $(2\cdot3\cdot5)\mid N, 7\nmid N, N>326$: take $p=7$; $\displaystyle\frac{p-1}{12}\psi(N)+2^{\omega(N)}\geq\frac{6N}{5}+8>400=8(p^2+1)$.
        \item $(2\cdot3\cdot5\cdot7)\mid N, 11\nmid N, N>420$: take $p=11$; $\displaystyle\frac{p-1}{12}\psi(N)+2^{\omega(N)}\geq\frac{16N}{7}+16>976=8(p^2+1)$.
        \item $(2\cdot3\cdot5\cdot7\cdot11)\mid N$: take $p$ to be the smallest prime not dividing $N$ and let $q$ be the largest prime dividing $N$. Now we have \[\frac{p-1}{12}\psi(N)+2^{\omega(N)}\geq\frac{p-1}{12}\cdot\frac{3\cdot4\cdot6\cdot8\cdot12}{2\cdot3\cdot5\cdot7\cdot11}N+32\geq\frac{576(p-1)q}{11}+32.\]
        In the last inequality we used that $N\geq2\cdot3\cdot5\cdot7\cdot q$. Since $p<2q$ by Bertrand's postulate, it is now elementary to prove that this is greater than $8(p^2+1)$.
    \end{itemize}
\end{proof}

As we can see from the proof, this result also eliminates many levels $N$ smaller than $432$. More precisely, using inequality \ref{ogg} we can eliminate 
\begin{align*}
    N\in\{&255,260,266,273,276,280,282,285,286,290,292,294,296,304,\\
    &306,308,310-318,320,322,324,326-328,330-334,336-340,\\
    &342-354,356-370,372-376,378-390,392-420,422-430,432\}.
\end{align*}

The following results are a direct application of \Cref{Fp2points}.

\begin{prop}\label{Fp2points_prop}
    The curve $X_0^{+d}(N)$ is not tetragonal over $\Q$ for $N=420$ and all $15$ possible values of $d$.
\end{prop}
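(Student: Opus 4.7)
The coarse inequality \eqref{ogg} is just barely insufficient for $N=420$: with $\psi(420)=1152$ and $\omega(420)=4$, the smallest good prime $p=11$ gives $L_{11}(420)=\frac{10}{12}\cdot 1152+16=976=8(11^2+1)$, exactly the boundary, so the uniform route through $X_0(420)\to X_0^{+d}(420)\to\mathbb{P}^1$ cannot settle the case. My plan is therefore to apply \Cref{Fp2points} directly to each of the fifteen quotient curves $X_0^{+d}(420)$: exhibit, for every admissible $d$, a prime $p\nmid 420$ for which
\[
\#X_0^{+d}(420)(\F_{p^2})>4(p^2+1),
\]
which forces $\gon_\Q X_0^{+d}(420)>4$ and rules out $\Q$-tetragonality.

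To obtain these counts, note that for each $y\in X_0^{+d}(420)(\F_{p^2})$ the preimage under the quotient map $X_0(420)\to X_0^{+d}(420)$ is either a single $w_d$-fixed $\F_{p^2}$-point (ramified), a pair of split $\F_{p^2}$-points, or an inert pair defined over $\F_{p^4}$ and exchanged by Frobenius composed with $w_d$. Counting these three possibilities yields the lower bound
\[
\#X_0^{+d}(420)(\F_{p^2})\geq\tfrac12\bigl(\#X_0(420)(\F_{p^2})+\#\mathrm{Fix}(w_d)(\F_{p^2})\bigr).
\]
Both terms on the right are accessible from modular-form data: $\#X_0(420)(\F_{p^2})$ from the trace of Frobenius on $H^1$ together with the cusp contributions, and $\#\mathrm{Fix}(w_d)(\F_{p^2})$ from the trace of $w_d$ composed with the Hecke correspondence, supplemented by the well-known count of fixed cusps and fixed CM points. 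These are precisely the quantities produced by the Rouse--Sutherland--Zureick-Brown \texttt{Magma} routines credited in the acknowledgements, which I would adapt by looping over the fifteen admissible $d$.

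The main obstacle is uniformity across $d$. For those divisors where $w_d$ has an unusually large fixed locus — equivalently, where the plus-part $J_0(420)^{w_d=1}$ is large and the quotient genus is close to half of $g(X_0(420))$ or more — the $\F_{p^2}$-count sits closest to the threshold $4(p^2+1)$, and a witness may not appear at $p=11$. In those cases one simply iterates through $p=13,17,19,23,\ldots$; since the genus of every nontrivial quotient of $X_0(420)$ is far in excess of four, the Weil bound forces $\#X_0^{+d}(420)(\F_{p^2})/(p^2+1)$ past $4$ at some small prime, and the explicit computation pins down which. Verifying this for each of the fifteen cases completes the proof.
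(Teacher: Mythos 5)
Your opening diagnosis is where the argument goes wrong. You correctly compute that Ogg's \emph{lower bound} satisfies $L_{11}(420)=976=8(11^2+1)$, so that \eqref{ogg} holds with equality and cannot by itself rule out a degree $8$ map $X_0(420)\to\PP^1$. But you then conclude that ``the uniform route through $X_0(420)\to X_0^{+d}(420)\to\PP^1$ cannot settle the case,'' which conflates the lower bound $L_p(N)$ with the actual point count. The paper's proof is precisely the uniform route: one computes in \texttt{Magma} that $\#X_0(420)(\F_{11^2})=1128$, which comfortably exceeds $9(11^2+1)=1098$, so \Cref{Fp2points} applied to $X_0(420)$ itself gives $\textup{gon}_\Q X_0(420)\geq 10$, and \Cref{poonen}(vi) applied to the degree $2$ quotient maps gives $\textup{gon}_\Q X_0^{+d}(420)\geq 5$ for all fifteen values of $d$ at once. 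A single point count on one curve finishes the proposition; no per-$d$ analysis is needed.

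Your fallback plan --- bounding $\#X_0^{+d}(420)(\F_{p^2})$ from below by $\tfrac12\bigl(\#X_0(420)(\F_{p^2})+\#\mathrm{Fix}(w_d)(\F_{p^2})\bigr)$ and applying \Cref{Fp2points} to each quotient --- is sound as a matter of principle (and indeed, once one knows the true count $1128$, it yields at least $564>4\cdot122=488$ points on every quotient without even using the fixed locus). But as written it is not a proof: it is contingent on computations you have not performed, it hedges with ``a witness may not appear at $p=11$,'' and the appeal to the Weil bound to guarantee eventual success runs in the wrong direction --- Weil bounds the point count from \emph{above} in terms of the genus, so a large genus does not by itself force $\#X_0^{+d}(420)(\F_{p^2})/(p^2+1)$ past $4$; that has to come from an actual computation, exactly as in the paper. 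The fix is simply to compute $\#X_0(420)(\F_{11^2})$ rather than substituting Ogg's inequality for it.
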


\begin{proof}
    Using \texttt{Magma}, we calculate that the curve $X_0(420)$ has $1128$ points over $\F_{11^2}$. The code is available on Github and the running time is around $1$ minute. \Cref{Fp2points} now tells us that the $\Q$-gonality of the curve $X_0(420)$ is at least $10$. Therefore, the $\Q$-gonality of all quotient curves $X_0^{+d}(420)$ is at least $5$.
\end{proof}

\begin{prop}\label{Fp2points_prop_quotient}
    The curve $X_0^{+d}(N)$ is not tetragonal over $\Q$ for the following values of $N$ and $d$:

        \begin{center}
\begin{tabular}{|c|c|c||c|c|c||c|c|c|}
\hline
    $(N,d)$ & $q$ & $\#X_0^{+d}(\F_q)$ & $(N,d)$ & $q$ & $\#X_0^{+d}(\F_q)$ & $(N,d)$ & $q$ & $\#X_0^{+d}(\F_q)$\\
    \hline
    $(140,5)$ & $9$ & $42$ & $(165,3)$ & $4$ & $24$ & $(195,13)$ & $4$ & $22$\\
    $(200,25)$ & $9$ & $42$ & $(208,13)$ & $25$ & $122$ & $(212,53)$ & $3$ & $18$\\
    $(220,55)$ & $9$ & $54$ & $(224,7)$ & $9$ & $44$ & $(225,25)$ & $4$ & $27$\\
    $(226,2)$ & $9$ & $42$ & $(237,3)$ & $4$ & $23$ & $(242,2)$ & $9$ & $45$\\
    $(242,121)$ & $9$ & $43$ & $(254,127)$ & $9$ & $46$ & $(259,7)$ & $4$ & $22$\\
    $(261,29)$ & $4$ & $22$ & $(268,4)$ & $9$ & $52$ & $(274,137)$ & $9$ & $42$\\
    $(275,11)$ & $9$ & $48$ & $(278,139)$ & $9$ & $51$ & $(288,9)$ & $25$ & $128$\\
    $(297,11)$ & $4$ & $25$ & $(298,149)$ & $9$ & $42$ & $(301,7)$ & $4$ & $24$\\
    $(302,151)$ & $9$ & $54$ & $(323,19)$ & $4$ & $22$ & $(325,25)$ & $4$ & $25$\\
    $(355,71)$ & $9$ & $42$ & & & & & &\\
    \hline
\end{tabular}
\end{center}
\end{prop}

\begin{proof}
    In all these cases we use \texttt{Magma} to compute the number of $\F_q$-rational points on $X_0^{+d}(N)$ and it turns out to be greater than $4(q+1)$. Therefore, the $\Q$-gonality of these curves is at least $5$.
\end{proof}

In the following results we will use Poonen's \cite[Proposition A.1.]{Poonen2007}.

\begin{prop}\label{poonen}
    Let $X$ be a curve of genus $g$ over a field $k$.
    \begin{enumerate}[(i)]
        \item If $L$ is a field extension of $k$, then $\textup{gon}_L(X)\leq \textup{gon}_k(X)$.
        \item If $k$ is algebraically closed and $L$ is a field extension of $k$, then $\textup{gon}_L(X)=\textup{gon}_k(X)$.
        \item If $g\geq2$, then $\textup{gon}_k(X)\leq 2g-2$.
        \item If $g\geq2$ and $X(k)\neq\emptyset$, then $\textup{gon}_k(X)\leq g$.
        \item If $k$ is algebraically closed, then $\textup{gon}_k(X)\leq\frac{g+3}{2}$.
        \item If $\pi:X\to Y$ is a dominant $k$-rational map, then $\textup{gon}_k(X)\leq \deg \pi\cdot\textup{gon}_k(Y)$.
        \item If $\pi:X\to Y$ is a dominant $k$-rational map, then $\textup{gon}_k(X)\geq\textup{gon}_k(Y)$.
    \end{enumerate}
\end{prop}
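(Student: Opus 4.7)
The plan is to handle each of the seven parts with standard tools. For (i), base change preserves the degree of a finite morphism, so any $f \colon X \to \mathbb{P}^1_k$ of degree $n$ pulls back to $f_L \colon X_L \to \mathbb{P}^1_L$ of the same degree, yielding $\textup{gon}_L(X) \leq \textup{gon}_k(X)$. For (ii), the reverse inequality requires a specialization argument: the functor parametrizing degree-$n$ morphisms $X \to \mathbb{P}^1$ is represented by a quasi-projective $k$-variety $M_n$, and an $L$-point of $M_n$ exists by hypothesis; since $k$ is algebraically closed, the Nullstellensatz forces a $k$-point of $M_n$, hence a degree-$n$ morphism defined over $k$.

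For parts (iii)--(v) I would rely on Riemann--Roch together with Brill--Noether theory. For (iii), the canonical class $K$ is $k$-rational with $h^0(K) = g \geq 2$, and any two-dimensional subspace of $H^0(X, K)$ determines a pencil of degree $\leq 2g - 2$. For (iv), with a chosen $P \in X(k)$ I would look for a $k$-rational effective divisor of degree $\leq g$ with $h^0 \geq 2$; the identification $\Pic^g(X) \cong J(X)$ afforded by $P$, combined with the fact that the Brill--Noether locus $W^1_g \subset \Pic^g(X)$ has expected dimension $g - 2 \geq 0$ and inherits enough structure from $P$ to contain a $k$-point, supplies such a divisor. For (v), I would invoke Meis's theorem (the existence half of Brill--Noether at $r = 1$), which produces a $g^1_d$ with $d = \lfloor (g+3)/2 \rfloor$ on every smooth curve of genus $g$ over an algebraically closed field.

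Parts (vi) and (vii) concern dominant morphisms between curves. Part (vi) is immediate by composition: if $h \colon Y \to \mathbb{P}^1$ has degree $m$, then $h \circ \pi \colon X \to \mathbb{P}^1$ has degree $m \cdot \deg \pi$. For (vii), I would pass to function fields. Given $f \colon X \to \mathbb{P}^1$ of degree $n$, set $F = \pi^* k(Y)$ and $G = f^* k(\mathbb{P}^1)$, both subfields of $k(X)$. Since $k(X)$ has transcendence degree one over $k$, the intersection $F \cap G$ cannot be $k$ (else $F \cdot G$ would have transcendence degree two), so by L\"uroth $F \cap G = k(s)$ for some non-constant $s$. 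Then $[F : k(s)] = [F \cdot G : G] \leq [k(X) : G] = n$ (using separability to identify the compositum), and $s$ determines a morphism $Y \to \mathbb{P}^1$ of degree $\leq n$, giving $\textup{gon}_k(Y) \leq \textup{gon}_k(X)$.

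The main obstacles lie in parts (iv) and (v). Part (iv) requires careful bookkeeping to ensure the degree-$g$ pencil is defined over $k$ rather than merely over $\bar k$; this is precisely where $X(k) \neq \emptyset$ enters, trivializing the Picard torsor. Part (v) depends on the nontrivial content of Brill--Noether existence, which one would cite rather than reprove. The other parts are essentially formal consequences of base change, Riemann--Roch, and elementary field theory.
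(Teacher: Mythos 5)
This proposition is not proved in the paper at all: it is quoted verbatim from Poonen's Proposition A.1 in the appendix to \cite{Poonen2007}, so the only "proof" the paper offers is a citation. Your sketches for (i), (ii), (iii), (v) and (vi) are essentially the standard arguments (base change, spreading out a $g^1_d$ to a finite-type $k$-scheme and using that a nonempty such scheme over an algebraically closed field has a $k$-point, the canonical pencil, Brill--Noether existence, and composition) and are fine, though for (v) in positive characteristic you should cite Kempf or Kleiman--Laksov rather than Meis.

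Parts (iv) and (vii), however, have genuine gaps. For (iv), you correctly identify the difficulty but do not resolve it: knowing that $W^1_g$ is nonempty over $\bar k$ and that $X(k)\neq\emptyset$ (so that $\operatorname{Pic}^g$ is trivialized as a torsor) does \emph{not} produce a $k$-point of $W^1_g$; a nonempty $k$-variety can easily have no $k$-points, and the rational point on $X$ gives you no traction on the subvariety $W^1_g$ itself. The correct elementary argument is Riemann--Roch applied to the $k$-rational class $K-(g-2)P$: it has degree $g$ and $h^0(K-(g-2)P)\geq h^0(K)-(g-2)=2$, giving a $g^1_g$ over $k$. For (vii), your argument fails at the first step: $F$ and $G$ are both subfields of $k(X)$, which has transcendence degree one, so their compositum automatically has transcendence degree one and nothing forces $F\cap G\neq k$ (two intermediate fields of a positive-genus function field can indeed intersect in the constants). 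Moreover, even granting $F\cap G=k(s)$, the degree inequality runs the wrong way: one always has $[FG:G]\leq [F:F\cap G]$, with equality only under linear disjointness, so $[FG:G]\leq [k(X):G]=n$ does not bound $[F:k(s)]$ from above. A correct proof of (vii) instead takes $f\in k(X)$ of degree $\gon_k(X)$, writes its characteristic polynomial over $k(Y)$ as $T^m+c_1T^{m-1}+\cdots+c_m$ with $c_i\in k(Y)$, observes that not every $c_i$ is constant (else $f$ would be constant), and bounds the degree of a nonconstant $c_i$ on $Y$ by the degree of $f$ via a local analysis of poles.
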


Since all modular curves $X_0(N)$ and their quotients have at least one rational cusp and we are interested only in those curves of genus $g\geq2$, this result implies that their $\Q$-gonalities are bounded from above by their genus.

We can also compute the $\F_p$-gonality of a curve defined over $\Q$ by checking the dimensions of Riemann-Roch spaces of degree $d$ effective $\F_p$-rational divisors.

\begin{prop}\label{Fp_trigonal}
    The $\Q$-gonality of the genus $4$ curve $X_0^{+d}(N)$ is equal to $4$ for the following values of $N$ and $d$:

        \begin{center}
\begin{tabular}{|c|c||c|c||c|c||c|c|}
\hline
    $(N,d)$ & $p$ & $(N,d)$ & $p$ & $(N,d)$ & $p$ & $(N,d)$ & $p$\\
    \hline
    
$(60,3)$ & $7$ & $(60,5)$ & $7$ & $(66,2)$ & $13$ & $(68,17)$ & $3$\\ $(70,5)$ & $17$ & $(74,2)$ & $3$ & $(76,4)$ & $5$ & $(77,11)$ & $3$\\
$(80,5)$ & $7$ & $(82,2)$ & $7$ & $(85,5)$ & $23$ & $(85,17)$ & $7$\\
$(88,8)$ & $5$ & $(91,7)$ & $11$ & $(93,3)$ & $5$ & $(98,2)$ & $11$\\
$(100,25)$ & $3$ & $(108,27)$ & $5$ & $(110,55)$ & $7$ & $(133,19)$ & $5$\\
$(145,29)$ & $11$ & $(177,59)$ & $5$ & $(188,47)$ & $3$ & &\\
    
    \hline
\end{tabular}
\end{center}
\end{prop}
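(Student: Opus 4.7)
The plan is to prove the lower bound $\text{gon}_\Q(X_0^{+d}(N)) \geq 4$ for each listed pair $(N,d)$; the reverse inequality $\text{gon}_\Q \leq 4$ is automatic, since every $X_0^{+d}(N)$ has a rational cusp and genus $g=4$, so Proposition \ref{poonen}(iv) applies. The content of the proposition is therefore a collection of matching lower bounds, one per row.

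The workhorse is the inequality $\text{gon}_\Q(C) \geq \text{gon}_{\F_p}(C)$ for primes $p$ of good reduction (in every row we have $p \nmid N$). For each triple $(N,d,p)$ in the table I would proceed as follows. First, produce an explicit projective model of $X_0^{+d}(N)$ over $\Q$ in \texttt{Magma}, for instance by taking the canonical model of $X_0(N)$ and restricting to the $w_d$-invariant subspace of regular differentials. Second, reduce this model modulo $p$ to obtain a smooth curve $C_p$ of genus $4$ over $\F_p$. Third, enumerate the effective $\F_p$-rational divisors $D$ on $C_p$ of degree at most $3$, and compute $\dim_{\F_p} L(D)$ using the Riemann-Roch functionality in \texttt{Magma}. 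The curve $C_p$ admits a degree $\leq 3$ map to $\mathbb{P}^1_{\F_p}$ if and only if some such $D$ satisfies $\dim L(D) \geq 2$, so verifying that none does gives $\text{gon}_{\F_p}(C_p) \geq 4$, hence $\text{gon}_\Q(X_0^{+d}(N)) \geq 4$, completing the proof.

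The main obstacle is computational rather than conceptual: one must both produce a workable equation for each $X_0^{+d}(N)$ (which for the larger composite levels in the table requires care, since the canonical ideal can be expensive to write down) and keep the enumeration of degree-$3$ $\F_p$-divisors tractable, as their count grows like a polynomial of degree $3$ in $\#C_p(\F_p)$ plus contributions from $\F_{p^2}$- and $\F_{p^3}$-points. This explains the choice of $p$ in each row: it should be the smallest good prime for which the Riemann-Roch certificate actually succeeds in ruling out all effective degree-$\leq 3$ pencils, balancing the desire for small enumeration size against the risk that in very small characteristic spurious $g^1_3$'s may appear that do not lift to characteristic zero. I would implement the whole verification as a loop over the table, matching the strategy and the \texttt{Magma} infrastructure already used in \Cref{Fp2points_prop}.
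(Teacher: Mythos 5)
Your proposal is correct and matches the paper's argument: the upper bound comes from the rational cusp and genus $4$ via \Cref{poonen}, and the lower bound from computing that no effective degree-$3$ $\F_p$-rational divisor has Riemann--Roch dimension $\geq 2$. The only difference is that the paper additionally notes one may restrict to divisors whose support contains a fixed rational point (by replacing $f$ with $1/(f-c)$), which speeds up the enumeration but does not change the logic.
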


\begin{proof}
    Using \texttt{Magma}, we compute that there are no functions of degree $\leq3$ in $\F_p(X_0^{+d}(N))$. To do this, we check that every degree $3$ effective $\F_p$-rational divisor has Riemann-Roch dimension equal to $1$ (the costant function is always in the Riemann-Roch space).
    
    We can reduce the number of divisors that need to be checked by noting the following: If there exists a function $f$ over a field $k$ of a certain degree and if $c=f(x_0)$ for some $k$-rational point $x_0$, then the function $g(x):=\frac{1}{f(x)-c}$ has the same degree and its polar divisor contains a $k$-rational point. Therefore, we only need to consider divisors of the form \[1+1+1 \textup{ or } 1+2,\] i.e. sums of three $\F_p$-rational points or an $\F_p$-rational point + a quadratic point and its conjugate. We obtain these points as places in the algorithmic function field of the curve $X^{+d}(N)$ over $\F_p$. See the codes on Github for more details.

    On the other hand, all these curves are of genus $4$ and have at least one rational cusp. By \Cref{poonen}, this implies that their $\Q$-gonality is at most $4$.
\end{proof}

\begin{prop}\label{Fp_gonality}
The $\Q$-gonality of the curve $X_0^{+d}(N)$ at least $5$ for the following values of $N$ and $d$:

\begin{center}
\begin{tabular}{|c|c||c|c||c|c||c|c||c|c|}
\hline
$(N,d)$ & $p$ & $(N,d)$ & $p$ & $(N,d)$ & $p$ & $(N,d)$ & $p$ & $(N,d)$ & $p$\\
    \hline
    
$(132,33)$ & $5$ & $(140,28)$ & $3$ & $(150,25)$ & $7$ & $(154,2)$ & $3$ & $(154,7)$ & $3$\\
$(154,11)$ & $3$ & $(154,14)$ & $5$ & $(154,22)$ & $5$ & $(154,77)$ & $3$ & $(164,41)$ & $3$\\
$(165,5)$ & $2$ & $(165,33)$ & $7$ & $(165,55)$ & $2$ & $(168,3)$ & $5$ & $(168,7)$ & $5$\\
$(168,8)$ & $5$ & $(170,2)$ & $7$ & $(170,5)$ & $3$ & $(170,10)$ & $3$ & $(170,17)$ & $3$\\
$(170,34)$ & $3$ & $(170,85)$ & $3$ & $(172,4)$ & $3$ & $(180,4)$ & $7$ & $(180,5)$ & $7$\\
$(180,9)$ & $7$ & $(180,20)$ & $7$ & $(180,36)$ & $7$ & $(180,45)$ & $7$ & $(186,2)$ & $5$\\
$(186,31)$ & $5$ & $(186,62)$ & $5$ & $(192,3)$ & $5$ & $(192,64)$ & $5$ & $(198,11)$ & $5$\\
$(198,22)$ & $5$ & $(198,99)$ & $5$ & $(200,8)$ & $3$ & $(201,3)$ & $2$ & $(201,67)$ & $2$\\
$(204,68)$ & $5$ & $(210,35)$ & $11$ & $(212,4)$ & $3$ & $(216,8)$ & $5$ & $(216,27)$ & $5$\\
$(218,2)$ & $5$ & $(218,109)$ & $5$ & $(219,3)$ & $5$ & $(219,73)$ & $2$ & $(226,113)$ & $3$\\
$(232,8)$ & $3$ & $(232,29)$ & $3$ & $(234,13)$ & $5$ & $(234,26)$ & $5$ & $(234,117)$ & $5$\\
$(235,5)$ & $7$ & $(235,47)$ & $3$ & $(237,79)$ & $7$ & $(240,80)$ & $11$ & $(244,4)$ & $3$\\
$(244,61)$ & $3$ & $(247,13)$ & $5$ & $(247,19)$ & $5$ & $(250,125)$ & $3$ & $(252,63)$ & $5$\\
$(253,11)$ & $2$ & $(253,23)$ & $3$ & $(258,86)$ & $5$ & $(265,5)$ & $3$ & $(265,53)$ & $3$\\
$(268,67)$ & $3$ & $(272,16)$ & $3$ & $(288,32)$ & $5$ & $(291,3)$ & $5$ & $(301,43)$ & $3$\\
    \hline
\end{tabular}
\end{center}
\end{prop}

\begin{proof}
    Similarly as in the previous proposition, we use \texttt{Magma} to compute that there are no functions of degree $\leq4$ in $\F_p(X_0^{+d}(N))$. Here we need to consider divisors of the form \[1+1+1+1, 1+1+2, \textup{ or } 1+3.\] The argumentation for that and the method of obtaining all such divisors is the same as in \Cref{Fp_trigonal}.
\end{proof}

Some computations in \Cref{Fp_gonality} were running for more than an hour, especially in the higher genus cases. This approach is not feasible for the curves in the next proposition which are all of high genus. For example, the curve $X_0^{+6}(246)$ is of genus $20$ and the curve $X_0^{+3}(300)$ is of genus $22$. Instead, we can prove that the quotient curve $X_0(N)/\left<w_d,w_{d'}\right>$, which is of smaller genus, is not tetragonal.

\begin{prop}\label{Fp_gonality_quotient}
The $\Q$-gonality of the curve $X_0^{+d}(N)$ is at least $5$ for the following values of $N$ and $d$:

\begin{center}
\begin{tabular}{|c|c|c|c||c|c|c|c|}
\hline
$N$ & $d$ & $p$ & $Y$ & $N$ & $d$ & $p$ & $Y$ \\
    \hline

$228$ & $3$, $19$, $57$ & $5$ & $X_0(228)/\left<w_3,w_{19}\right>$ & $228$ & $12$, $57$, $76$ & $5$ & $X_0(228)/\left<w_{12},w_{57}\right>$\\
$228$ & $4$, $57$ & $5$ & $X_0(228)/\left<w_4,w_{57}\right>$ & $240$ & $3$, $5$, $15$ & $11$ & $X_0(240)/\left<w_3,w_5\right>$\\
$240$ & $3$, $16$, $48$ & $11$ & $X_0(240)/\left<w_3,w_{16}\right>$ & $246$ & $6$, $82$, $123$ & $5$ & $X_0(246)/\left<w_6,w_{82}\right>$\\
$246$ & $3,82$ & $5$ & $X_0(246)/\left<w_3,w_{82}\right>$ &
$264$ & $3$, $8$, $24$ & $5$ & $X_0(264)/\left<w_3,w_8\right>$\\
$264$ & $3$, $11$, $33$ & $5$ & $X_0(264)/\left<w_3,w_{11}\right>$ & $264$ & $8$, $11$, $88$ & $5$ & $X_0(264)/\left<w_8,w_{11}\right>$\\
$270$ & $5$, $27$, $135$ & $7$ & $X_0(270)/\left<w_5,w_{27}\right>$ & $300$ & $3$, $100$ & $7$ & $X_0(300)/\left<w_3,w_{100}\right>$\\
$300$ & $12$, $25$ & $7$ & $X_0(300)/\left<w_{12},w_{25}\right>$ & $309$ & $3$, $103$ & $5$ & $X_0(309)/\left<w_3,w_{103}\right>$\\

    \hline
\end{tabular}
\end{center}
\end{prop}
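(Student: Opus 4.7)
The plan is to reduce each case to a gonality computation on a smaller quotient curve $Y = X_0(N)/\langle w_d, w_{d'}\rangle$, which sits below $X_0^{+d}(N)$ in the tower of Atkin-Lehner quotients. Since $Y$ is the quotient of $X_0^{+d}(N)$ by the induced action of $w_{d'}$, there is a dominant $\Q$-rational map $X_0^{+d}(N) \to Y$. By Proposition \ref{poonen}(vii), this gives $\textup{gon}_\Q(X_0^{+d}(N)) \geq \textup{gon}_\Q(Y)$, and combining with the reduction inequality $\textup{gon}_{\F_p}(Y) \leq \textup{gon}_\Q(Y)$ for any prime $p$ of good reduction, it suffices to verify $\textup{gon}_{\F_p}(Y) \geq 5$ at the prime $p$ listed in the table.

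The gonality computation over $\F_p$ proceeds exactly as in Propositions \ref{Fp_trigonal} and \ref{Fp_gonality}. A morphism $Y \to \mathbb{P}^1_{\F_p}$ of degree $n$ corresponds to an effective $\F_p$-rational divisor $D$ of degree $n$ with $\dim H^0(Y, \mathcal{O}(D)) \geq 2$. To bound the gonality below by $5$, I would enumerate all effective $\F_p$-rational divisors of degree at most $4$ and use \texttt{Magma} to check that each has Riemann-Roch dimension $1$. As before, one may shrink the search space using the observation that if $f$ has degree $n$ and $c \in \F_p$, then $1/(f-c)$ has the same degree and its polar divisor contains an $\F_p$-rational point; it therefore suffices to restrict to divisors whose support includes at least one rational point.

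To set this up in \texttt{Magma}, one constructs $Y$ as the quotient of $X_0(N)$ by the subgroup $\langle w_d, w_{d'}\rangle$ of Atkin-Lehner involutions acting on a canonical model, reduces modulo $p$, and computes the Riemann-Roch spaces of the candidate divisors on the reduced curve. The specific pairs $(d,d')$ listed in the table (for instance $(d,d') = (3,19)$ when $N=228$, or $(d,d') = (5,27)$ when $N=270$) are precisely those for which this further quotient retains enough structure for the lower bound to work, while being small enough that the enumeration terminates.

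The main obstacle is computational: even after passing to $Y$, the genus can still be substantial (the curves $X_0^{+d}(N)$ in this table are specifically the ones whose genus was too large for the direct approach of Proposition \ref{Fp_gonality}), and the number of effective degree-$4$ $\F_p$-rational divisors grows with $\#Y(\F_p)$. The art lies in choosing $d'$ so that $Y$ has the smallest possible genus while the prime $p$ is large enough that $\F_p$ has sufficiently many rational points to rule out a degree-$4$ pencil, yet small enough to keep the divisor enumeration tractable. For the trickiest case, the genus $20$ curve $X_0^{+6}(246)$, one lands on $Y = X_0(246)/\langle w_6, w_{82}\rangle$ with $p=5$, which is just within reach of a direct Riemann-Roch search.
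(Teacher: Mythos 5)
Your proposal is correct and matches the paper's proof: the paper likewise computes via \texttt{Magma} that $Y$ has no $\F_p$-rational functions of degree $\leq 4$ and then transfers the bound to $X_0^{+d}(N)$ through the degree $2$ quotient map using \Cref{poonen}(vii). The only cosmetic difference is that you apply \Cref{poonen}(vii) over $\Q$ and then reduce $Y$ modulo $p$, whereas the paper applies it over $\F_p$; both orderings are valid.
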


\begin{proof}
    Using \texttt{Magma}, we compute that there are no $\F_p$-rational functions of degree $\leq4$ from $Y=X_0(N)/\left<w_d,w_{d'}\right>$ to $\mathbb{P}^1$. Since there is a rational degree $2$ quotient map $X_0^{+d}(N)\to Y$, \Cref{poonen}(vii) tells us that the $\F_p$-gonality of $X_0^{+d}(N)$ is $\geq5$.
\end{proof}


\section{Rational morphisms to $\mathbb{P}^1$}\label{degree4mapsection}

In most cases, when there exists a degree $4$ rational morphism $X_0^{+d}(N)\to \mathbb{P}^1$, we can realise it via the rational quotient map to the curve $X_0(N)/\left<w_d,w_{d'}\right>$, as the following two propositions show.

\begin{prop}\label{quotientell}
    The quotient curve $X_0(N)/\left<w_d,w_{d'}\right>$ is an elliptic curve over $\Q$ for the following values of $N,d,d'$:

\begin{center}
\begin{tabular}{|c|c||c|c||c|c||c|c|}
\hline
$N$ & $(d,d')$ & $N$ & $(d,d')$ & $N$ & $(d,d')$ & $N$ & $(d,d')$\\
    \hline
    
$70$ & $(2,35)$ & $86$ & $(2,43)$ & $96$ & $(3,32)$ & $99$ & $(9,11)$\\
$105$ & $(3,35)$ & $110$ & $(2,5)$ & $111$ & $(3,37)$ & $118$ & $(2,59)$\\ $123$ & $(3,41)$ & $124$ & $(4,31)$ &$141$ & $(3,47)$ & $142$ & $(2,71)$\\ $143$ & $(11,13)$ & $145$ & $(5,29)$ & $155$ & $(5,31)$ & $159$ & $(3,53)$\\
$188$ & $(4,47)$ & & & & & &\\
 
    \hline
\end{tabular}
\end{center}
    
\end{prop}

\begin{prop}\label{quotienthyper}
    The quotient curve $X_0(N)/\left<w_d,w_{d'}\right>$ is a hyperelliptic curve over $\Q$ for the following values of $N,d,d'$. Here $g$ denotes the genus of the curve $X_0(N)/\left<w_d,w_{d'}\right>$.

\begin{center}
\begin{tabular}{|c|c|c||c|c|c||c|c|c|}
\hline
$N$ & $(d,d')$ & $g$ & $N$ & $(d,d')$ & $g$ & $N$ & $(d,d')$ & $g$\\
    \hline
    
$66$ & $(3,22)$ & $2$ & $70$ & $(7,10)$ & $2$ & $78$ & $(2,3)$ & $3$\\
$84$ & $(3,4),(7,12),(4,21),(3,28)$ & $2$ & $88$ & $(8,11)$ & $2$ & $90$ & $(2,45),(5,18),(9,10)$ & $2$\\
$93$ & $(3,31)$ & $2$ & $102$ & $(2,51),(3,17)$ & $2$ & $104$ & $(8,13)$ & $2$\\
$105$ & $(3,5),(3,7),(7,15)$ & $3$ & $106$ & $(2,53)$ & $2$ & $110$ & $(2,5),(2,11),(5,22)$ & $3$\\
$112$ & $(7,16)$ & $2$ & $114$ & $(2,19)$ & $3$ & $114$ & $(3,38)$ & $2$\\
$115$ & $(5,23)$ & $2$ & $116$ & $(4,29)$ & $2$ & $117$ & $(9,13)$ & $2$\\
$120$ & $(8,15),(15,24)$ & $2$ & $120$ & $(5,24)$ & $3$ & $122$ & $(2,61)$ & $2$\\
$126$ & $(2,63),(14,18)$ & $2$ & $126$ & $(7,9),(9,14)$ & $3$ & $129$ & $(3,43)$ & $2$\\
$130$ & $(10,26)$ & $2$ & $130$ & $(2,13)$ & $3$ & $132$ & $(4,11)$ & $2$\\ $133$ & $(7,19)$ & $2$ & $134$ & $(2,67)$ & $2$ & $135$ & $(5,27)$ & $2$\\
$136$ & $(8,17)$ & $3$ & $138$ & $(3,23),(6,23)$ & $2$ & $140$ & $(4,35)$ & $2$\\
$146$ & $(2,73)$ & $2$ & $147$ & $(3,49)$ & $2$ & $150$ & $(6,50)$ & $2$\\
$153$ & $(9,17)$ & $2$ & $156$ & $(4,39)$ & $2$ & $158$ & $(2,79)$ & $2$\\
$161$ & $(7,23)$ & $2$ &$165$ & $(11,15)$ & $3$ & $166$ & $(2,83)$ & $2$\\
$168$ & $(21,24)$ & $4$ & $171$ & $(9,19)$ & $3$ & $176$ & $(11,16)$ & $4$\\
$177$ & $(3,59)$ & $2$ & $184$ & $(8,23)$ & $2$ & $190$ & $(5,19)$ & $2$\\
$195$ & $(5,39)$ & $3$ & $205$ & $(5,41)$ & $2$ & $206$ & $(2,103)$ & $2$\\
$207$ & $(9,23)$ & $3$ &$209$ & $(11,19)$ & $2$ & $213$ & $(3,71)$ & $2$\\
$215$ & $(5,42)$ & $2$ & $221$ & $(13,17)$ & $2$ & $279$ & $(9,31)$ & $5$\\
$284$ & $(4,71)$ & $2$ & $287$ & $(7,41)$ & $2$ & $299$ & $(13,23)$ & $2$\\
 
    \hline
\end{tabular}
\end{center}
    
\end{prop}

\begin{proof}
    Every curve of genus $2$ is hyperelliptic and \cite{FurumotoHasegawa1999} gives us all hyperelliptic quotients of genus $g\geq3$. All quotients of $X_0(N)$ are defined over $\Q$ so all these curves are hyperelliptic curves over $\Q$. 
\end{proof}

If the defining equation of the (elliptic or hyperelliptic) curve $X_0(N)/\left<w_d,w_{d'}\right>$ is $y^2+h(x)y=f(x)$, we can take a degree $2$ rational map to $\PP^1$ to be $y$. Therefore, the desired degree $4$ morphism is \[X_0^{+d}(N)\to X_0(N)/\left<w_d,w_{d'}\right>\xrightarrow{y}\PP^1.\]

\begin{prop}\label{trigonalmap}
    There exists a degree $3$ rational map from $X_0^{+d}(N)$ to $\mathbb{P}^1$ for
$$(N,d)\in\{(66,33),(74,37),(86,43),(108,4),(112,7)\}.$$
\end{prop}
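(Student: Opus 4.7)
The plan is, for each of the five pairs $(N,d)$, to exhibit a degree $3$ linear system on $X_0^{+d}(N)$ defined over $\Q$. Equivalently, by Riemann--Roch, we seek an effective $\Q$-rational divisor $D$ of degree $3$ with $\ell(D) \geq 2$; the associated linear system then produces a morphism $X_0^{+d}(N) \to \mathbb{P}^1$ of degree $3$ over $\Q$.

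All five of the listed curves have genus $4$ (these pairs appear in \Cref{trigonalthm}). For a smooth curve of genus $4$, being trigonal is equivalent to the canonical image, a sextic curve in $\mathbb{P}^3$, lying on a quadric surface $Q$; the ruling(s) of $Q$ then cut out the trigonal pencil(s). Accordingly, the first step is to compute a basis of the $w_d$-invariant subspace of $S_2(\Gamma_0(N))$ in \texttt{Magma} and use it to embed $X_0^{+d}(N)$ canonically into $\mathbb{P}^3$. The second step is to find the unique quadric $Q$ containing this canonical image, compute its rank, and extract a ruling defined over $\Q$.

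When $Q$ has rank $3$, i.e.\ is a quadric cone, projection from its vertex gives a $\Q$-rational $g^1_3$ directly. When $Q$ has rank $4$, it is a smooth quadric with two rulings, which Galois may or may not swap; if each ruling is individually $\Q$-rational, either one furnishes a $\Q$-rational $g^1_3$, while if they are swapped the trigonal pencil is only defined over a quadratic extension. The main obstacle, and the content of the proposition, is therefore verifying in each of the five cases that the ruling(s) of $Q$ yield a morphism to $\mathbb{P}^1$ that is both defined over $\Q$ and of degree exactly $3$. This is a finite \texttt{Magma} check which we carry out pair by pair, producing in each case an explicit degree $3$ rational map $X_0^{+d}(N) \to \mathbb{P}^1$.
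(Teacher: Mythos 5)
Your proposal is correct and is essentially the paper's argument made explicit: the paper simply invokes the \texttt{Magma} intrinsic \texttt{Genus4GonalMap} on the genus~$4$ curve $X_0^{+d}(N)$, and that intrinsic works precisely by finding the unique quadric containing the canonical sextic in $\mathbb{P}^3$ and extracting a $\Q$-rational ruling (checking that the resulting gonal map has degree $3$ rather than $4$), exactly as you describe. No substantive difference.
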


\begin{proof}
    The curve $X_0^{+d}(N)$ is of genus $4$ and we can use the \texttt{Magma} function \texttt{Genus4GonalMap(C)} to get the desired map. For example, the code for the curve $X_0^{+33}(66)$ is:
    \begin{lstlisting}[language=Python]
    
> X:=X0NQuotient(66,[33]);
> assert Genus(X) eq 4;
> Genus4GonalMap(X);
3 Mapping from: Crv: X to Curve over Rational Field defined by
0
with equations :
-x[2]
-x[1] + x[3]

    \end{lstlisting}

    If a genus $4$ curve is not elliptic nor hyperelliptic, then the function \texttt{Genus4GonalMap(C)} returns a degree $3$ map. This map can either be defined over $\Q$ or a quadratic field. In all these cases it will be defined over $\Q$, as shown in the code example.
\end{proof}

\begin{prop}\label{tetragonalmap}
    There exists a degree $4$ rational map from $X_0^{+d}(N)$ to $\mathbb{P}^1$ for
    $$(N,d)\in\{(144,9),(144,16),(148,4),(160,32),(196,4),(208,16),(217,31),\}.$$
\end{prop}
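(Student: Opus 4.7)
The plan is to construct the degree $4$ maps explicitly using \texttt{Magma}. None of the pairs listed appear in Propositions \ref{quotientell} or \ref{quotienthyper}, so the general strategy of composing the degree $2$ quotient map $X_0^{+d}(N)\to X_0(N)/\langle w_d,w_{d'}\rangle$ with a degree $2$ map on an elliptic or hyperelliptic quotient is unavailable here, and a direct construction is required.

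First I would obtain a projective model of each curve $X_0^{+d}(N)$. Concretely, the plan is to compute the subspace of $S_2(\Gamma_0(N))$ fixed by the Atkin-Lehner involution $w_d$, which has dimension equal to the genus $g$ of $X_0^{+d}(N)$, and use it to produce the canonical embedding into $\mathbb{P}^{g-1}$ (none of the curves in the list is hyperelliptic, since the hyperelliptic quotients are already classified in \cite{FurumotoHasegawa1999}). From this model \texttt{Magma} can extract a smooth projective model on which divisors and Riemann-Roch spaces are computable.

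Second, I would exhibit a degree $4$ effective divisor $D$ with $\dim |D|\geq 1$, which gives the required pencil. For the genus $4$ cases in the list the natural route is to invoke the \texttt{Magma} function \texttt{Genus4GonalMap(C)} (as in \Cref{trigonalmap}), which returns either a trigonal or a tetragonal gonal map; since the prior sections rule out trigonality for the pairs in question, the output is necessarily a degree $4$ map. For the higher genus cases, I would search among small combinations of rational cusps and images of known low-degree points on $X_0(N)$, compute the Riemann-Roch space of each candidate divisor of degree $4$, and stop when a $2$-dimensional linear system appears. The corresponding morphism to $\mathbb{P}^1$ is then extracted from a basis of that space.

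The main obstacle is computational rather than conceptual: the models grow quickly with $g$, and Riemann-Roch computations on canonical models of genus up to roughly $15$ can be slow, so the search must be guided by the available rational structure. Once a candidate map is produced, it must be verified that it is genuinely of degree $4$ (equivalently, that the pencil is basepoint-free and not a multiple of a lower-degree pencil); this is a routine check via the degree of the induced function field extension. The explicit \texttt{Magma} scripts performing these computations are contained in the repository linked in the introduction.
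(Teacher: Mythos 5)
Your overall strategy---produce a model, then hunt for an effective degree~$4$ divisor $D$ with $\ell(D)\geq 2$---is the same one the paper uses, but as written your search would fail for more than half of the seven cases. The paper reports that for $(148,4)$, $(160,32)$, $(208,16)$, $(217,31)$ there is \emph{no} degree~$4$ function whose polar divisor is supported on rational points, so a search over sums $P_1+P_2+P_3+P_4$ of rational points (cusps included) comes up empty; only $(144,9)$, $(144,16)$, $(196,4)$ succumb to that search. The missing ingredient is a concrete supply of quadratic points: the paper finds them by intersecting the canonical model with hyperplanes $b_0x_0+b_1x_1+b_2x_2=0$ with small coprime integer coefficients (following Box), using the observation that the first three coordinates of any quadratic point are $\Q$-linearly dependent, and then finds the pencil inside $L(P_1+P_2+Q+\sigma(Q))$ with $Q$ quadratic. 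Your phrase ``images of known low-degree points on $X_0(N)$'' gestures at this but is not a method: the low-degree points of $X_0(N)$ at levels $148$, $160$, $208$, $217$ are not classified, and without an explicit search procedure for quadratic points on the quotient the construction does not go through.

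Two smaller points. None of the seven curves has genus~$4$ (the genus~$4$ tetragonal quotients are exactly those of \Cref{Fp_trigonal}, and \Cref{trigonalmap} handles \texttt{Genus4GonalMap}), so that branch of your argument is vacuous here; all seven curves have genus $\geq 5$. Also, your opening claim that the quotient-map strategy is unavailable is correct and worth keeping, since it motivates why a direct construction is needed for precisely these pairs.
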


\begin{proof}
    For $(N,d)\in\{(144,9),(144,16),(196,4)\}$ we found a divisor of the form $D=P_1+P_2+P_3+P_4$ with dimension $\ell(D)=2$, where $P_i\in X_0^{+d}(N)(\Q)$.

    In the other four cases we were not able to find a degree $4$ function whose polar divisor is supported on rational points so we had to search for quadratic points.

    We searched for quadratic points by intersecting the curve $X_0^{+d}(N)$ with hyperplanes of the form
    $$b_0x_0+\ldots+b_kx_k=0,$$
    where $b_0,\ldots,b_k\in \Z$ are coprime and chosen up to a certain bound, a similar idea as in \cite[Section 3.2]{Box19}. We can improve this by noting that, in a quadratic point $(x_0,\ldots,x_k)$, already its first three coordinates must be linearly dependent over $\Q$. Therefore, it is enough to check the hyperplanes
    $$b_0x_0+b_1x_1+b_2x_2=0.$$
    In all of these cases we found a divisor of the form $D=P_1+P_2+Q+\sigma(Q)$ with dimension $\ell(D)=2$, where $P_1,P_2\in X_0^{+d}(N)(\Q)$, and $Q$ is one of the quadratic points we found.

    \cite[Appendix B.12]{Stichtenoth09} now tells us that \[L_\Q(D)=\Q(X_0^{+d}(N))\cap L(D)\] is of the same dimension $\ell(D)=2$ over $\Q$. This means that there exists a non-constant rational function in $L_\Q(D)$. As the curve $X_0^{+d}(N)$ is neither hyperelliptic nor trigonal by \cite{FurumotoHasegawa1999, HasegawaShimura1999}, its degree is equal to $4$.
\end{proof}

\section{Lower bounds on $\C$-gonality}\label{C-gonality_section}
In this section we will prove that the remaining curves $X_0^{+d}(N)$ are not $\C$-tetragonal. The Castelnuovo-Severi inequality is one tool to do that

\begin{prop}[Castelnuovo-Severi inequality]\label{tm:CS}
Let $k$ be a perfect field, and let $X,\ Y, \ Z$ be curves over $k$. Let non-constant morphisms $\pi_Y:X\rightarrow Y$ and $\pi_Z:X\rightarrow Z$ over $k$ be given, and let their degrees be $m$ and $n$, respectively. Assume that there is no morphism $X\rightarrow X'$ of degree $>1$ through which both $\pi_Y$ and $\pi_Z$ factor. Then the following inequality holds:
\begin{equation} \label{eq:CS}
g(X)\leq m \cdot g(Y)+n\cdot g(Z) +(m-1)(n-1).
\end{equation}
\end{prop}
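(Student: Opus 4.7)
The plan is to realize $X$ as the normalization of a curve on the surface $S := Y \times Z$ and extract the bound from adjunction. After base change to $\bar{k}$ (which preserves genera, degrees, and the hypothesis), I would form $\phi := (\pi_Y, \pi_Z) \colon X \to S$ and set $C := \phi(X)$. The assumption that $\pi_Y$ and $\pi_Z$ do not factor through a common nontrivial cover is precisely what guarantees that the induced map $\phi \colon X \to C$ is birational: otherwise, composing with the normalization $\tilde{C} \to C$ would give a morphism $X \to \tilde{C}$ of degree $>1$ through which both $\pi_Y$ and $\pi_Z$ factor. In particular $X$ is the normalization of $C$ and $g(X) \leq p_a(C)$, so it suffices to bound $p_a(C)$.

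The main computation uses the adjunction formula $2p_a(C) - 2 = C^2 + C \cdot K_S$, where $K_S = p_Y^*K_Y + p_Z^*K_Z$. Since $p_Y|_C$ and $p_Z|_C$ have degrees $m$ and $n$, the projection formula immediately yields $C \cdot K_S = m(2g(Y)-2) + n(2g(Z)-2)$. For the self-intersection, I would introduce the fiber classes $\alpha := p_Y^*(\text{pt})$ and $\beta := p_Z^*(\text{pt})$, which satisfy $\alpha^2 = \beta^2 = 0$ and $\alpha \cdot \beta = 1$. Computing $C \cdot \alpha = m$ and $C \cdot \beta = n$, I can decompose $[C] = n\alpha + m\beta + \gamma$ in $\mathrm{NS}(S) \otimes \mathbb{Q}$ with $\gamma \cdot \alpha = \gamma \cdot \beta = 0$, so that $C^2 = 2mn + \gamma^2$.

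The delicate step, which I expect to be the main obstacle, is showing $\gamma^2 \leq 0$. Here I would invoke the Hodge index theorem: the class $\alpha + \beta$ is ample on $S$ and $\gamma \cdot (\alpha + \beta) = 0$, so any such orthogonal class has non-positive self-intersection. This gives $C^2 \leq 2mn$, and substituting back produces
\[
2p_a(C) - 2 \leq 2mn + m(2g(Y)-2) + n(2g(Z)-2),
\]
which rearranges to $p_a(C) \leq m\,g(Y) + n\,g(Z) + (m-1)(n-1)$. Combined with $g(X) \leq p_a(C)$, this gives the desired inequality.
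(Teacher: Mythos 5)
Your argument is correct, and it is the standard proof of the Castelnuovo--Severi inequality; the paper itself offers no proof of this proposition, treating it as a classical result, so there is nothing in the text to compare against step by step. All the ingredients you use are sound: the hypothesis forces $\phi=(\pi_Y,\pi_Z)$ to be birational onto its image $C$ (else $X\to\tilde C$ would be a common nontrivial factorization), adjunction on $S=Y\times Z$ with $K_S=p_Y^*K_Y+p_Z^*K_Z$ gives $C\cdot K_S=m(2g(Y)-2)+n(2g(Z)-2)$, and the Hodge index theorem applied to $\gamma=[C]-n\alpha-m\beta$, which is orthogonal to the ample class $\alpha+\beta$, yields $C^2\le 2mn$; combining these with $g(X)=g(\tilde C)\le p_a(C)$ gives exactly \eqref{eq:CS}. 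The one point to be careful about is your parenthetical claim that base change to $\bar k$ ``preserves the hypothesis'': this is automatic only if the hypothesized common factorization $X\to X'$ is taken over $\bar k$, which is indeed how the paper says the proposition is to be read (see the remark immediately following the statement). If one instead assumes only that no such factorization exists over $k$, then concluding the same over $\bar k$ is not formal --- that descent is precisely the content of the result of Khawaja and Siksek cited there --- so your proof establishes the classical ($\bar k$-hypothesis) version, which is the version the paper states and uses.
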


Since $\C$ and $\Q$ are both perfect fields, we can use the Castelnuovo-Severi inequality to get lower bounds on both $\C$ and $\Q$-gonalities.

In this statement of Castelnuovo-Severi inequality the hypothetical morphism $X\to X'$ is defined over $\overline{k}$. However, in \cite[Theorem 12]{KhawajaSiksek2023} it was recently proved that we can suppose that this morphism is defined over $k$. This is not important to us because we are using the inequality to obtain bounds on $\C$-gonality. However, if one wishes to obtain bounds on $\Q$-gonality, the result from \cite{KhawajaSiksek2023} is helpful.

\begin{prop}\label{CSprop}
The $\C$-gonality of the curve $X_0^{+d}(N)$ is at least $5$ for the following values of $N$ and $d$. Here $g$ denotes the genus of the curve $X_0^{+d}(N)$ and $g'$ denotes the genus of the quotient curve $X_0(N)/\left<w_d,w_{d'}\right>$.
\begin{center}
\begin{longtable}{|c|c|c|c||c|c|c|c||c|c|c|c|}

\hline
\addtocounter{table}{-1}
$(N,d)$ & $g$ & $d'$ & $g'$ & $(N,d)$ & $g$ & $d'$ & $g'$ & $(N,d)$ & $g$ & $d'$ & $g'$\\
    \hline
    
$(132,3)$ & $10$ & $44$ & $3$ & $(132,12)$ & $10$ & $11$ & $3$ & $(138,2)$ & $11$ & $23$ & $3$\\
$(138,46)$ & $11$ & $2$ & $3$ & $(140,7)$ & $10$ & $20$ & $3$ & $(150,2)$ & $10$ & $75$ & $3$\\
$(150,3)$ & $10$ & $50$ & $3$ & $(156,3)$ & $11$ & $13$ & $3$ & $(156,12)$ & $11$ & $52$ & $3$\\
$(156,13)$ & $12$ & $3$ & $3$ & $(156,52)$ & $12$ & $12$ & $3$ & $(174,2)$ & $14$ & $87$ & $3$\\
$(174,3)$ & $14$ & $29$ & $3$ & $(174,6)$ & $13$ & $58$ & $4$ & $(174,29)$ & $13$ & $3$ & $3$\\
$(174,58)$ & $14$ & $6$ & $4$ & $(182,2)$ & $13$ & $91$ & $4$ & $(182,7)$ & $13$ & $26$ & $4$\\
$(182,13)$ & $12$ & $14$ & $4$ & $(182,14)$ & $11$ & $26$ & $3$ & $(182,26)$ & $10$ & $14$ & $3$\\
$(182,91)$ & $10$ & $14$ & $3$ & $(183,61)$ & $10$ & $3$ & $3$ & $(186,3)$ & $14$ & $62$ & $4$\\
$(186,6)$ & $14$ & $62$ & $5$ & $(186,62)$ & $14$ & $6$ & $5$ & $(190,2)$ & $14$ & $95$ & $3$\\
$(190,38)$ & $14$ & $10$ & $3$ & $(195,3)$ & $13$ & $65$ & $3$ & $(195,15)$ & $13$ & $39$ & $3$\\
$(198,2)$ & $14$ & $99$ & $5$ & $(198,9)$ & $15$ & $11$ & $5$ & $(204,3)$ & $16$ & $68$ & $5$\\
$(204,4)$ & $15$ & $51$ & $5$ & $(204,12)$ & $16$ & $51$ & $5$ & $(204,17)$ & $16$ & $4$ & $6$\\
$(210,2)$ & $21$ & $35$ & $8$ & $(210,3)$ & $21$ & $35$ & $7$ & $(210,5)$ & $19$ & $7$ & $7$\\
$(210,6)$ & $19$ & $35$ & $6$ & $(210,7)$ & $21$ & $5$ & $7$ & $(210,10)$ & $21$ & $14$ & $6$\\
$(210,14)$ & $16$ & $10$ & $6$ & $(210,15)$ & $21$ & $21$ & $7$ & $(210,21)$ & $19$ & $15$ & $7$\\
$(210,30)$ & $21$ & $35$ & $8$ & $(210,42)$ & $21$ & $35$ & $8$ & $(210,70)$ & $21$ & $2$ & $8$\\
$(210,105)$ & $19$ & $3$ & $7$ & $(220,4)$ & $16$ & $55$ & $4$ & $(220,5)$ & $16$ & $11$ & $3$\\
$(220,11)$ & $13$ & $5$ & $4$ & $(220,20)$ & $16$ & $44$ & $4$ & $(220,44)$ & $13$ & $20$ & $4$\\
$(222,2)$ & $18$ & $111$ & $4$ & $(222,3)$ & $17$ & $37$ & $5$ & $(222,6)$ & $18$ & $74$ & $3$\\
$(222,37)$ & $18$ & $3$ & $5$ & $(222,74)$ & $13$ & $6$ & $3$ & $(222,111)$ & $10$ & $6$ & $3$\\
$(230,2)$ & $17$ & $115$ & $5$ & $(230,5)$ & $16$ & $46$ & $5$ & $(230,10)$ & $16$ & $23$ & $6$\\
$(230,23)$ & $17$ & $10$ & $6$ & $(230,46)$ & $15$ & $5$ & $5$ & $(230,115)$ & $14$ & $2$ & $5$\\
$(231,3)$ & $15$ & $77$ & $3$ & $(231,7)$ & $15$ & $33$ & $4$ & $(231,11)$ & $15$ & $21$ & $4$\\
$(231,21)$ & $13$ & $33$ & $4$ & $(231,33)$ & $13$ & $21$ & $4$ & $(231,77)$ & $11$ & $3$ & $3$\\
$(234,18)$ & $18$ & $117$ & $7$ & $(234,9)$ & $17$ & $26$ & $6$ & $(234,18)$ & $18$ & $26$ & $7$\\
$(238,2)$ & $17$ & $119$ & $3$ & $(238,7)$ & $17$ & $17$ & $3$ & $(238,14)$ & $17$ & $34$ & $3$\\
$(238,17)$ & $15$ & $7$ & $3$ & $(238,34)$ & $15$ & $14$ & $3$ & $(245,5)$ & $10$ & $49$ & $3$\\
$(246,2)$ & $19$ & $123$ & $7$ & $(246,3)$ & $20$ & $41$ & $7$ & $(246,6)$ & $20$ & $41$ & $7$\\
$(246,82)$ & $20$ & $2$ & $8$ & $(248,8)$ & $15$ & $31$ & $3$ & $(249,3)$ & $14$ & $83$ & $3$\\
$(250,2)$ & $14$ & $125$ & $5$ & $(252,4)$ & $17$ & $63$ & $5$ & $(252,7)$ & $19$ & $9$ & $7$\\
$(252,9)$ & $19$ & $7$ & $7$ & $(252,28)$ & $19$ & $36$ & $7$ & $(252,36)$ & $19$ & $28$ & $7$\\
$(254,2)$ & $16$ & $127$ & $4$ & $(258,2)$ & $20$ & $43$ & $8$ & $(258,3)$ & $20$ & $86$ & $7$\\
$(258,6)$ & $21$ & $86$ & $7$ & $(258,43)$ & $21$ & $2$ & $8$ & $(258,129)$ & $18$ & $6$ & $7$\\
$(259,37)$ & $12$ & $7$ & $4$ & $(261,9)$ & $13$ & $29$ & $4$ & $(262,2)$ & $16$ & $131$ & $4$\\
$(267,3)$ & $15$ & $89$ & $4$ & $(270,2)$ & $22$ & $135$ & $7$ & $(270,5)$ & $21$ & $27$ & $8$\\
$(270,10)$ & $22$ & $54$ & $7$ & $(270,27)$ & $22$ & $5$ & $8$ & $(270,54)$ & $19$ & $10$ & $7$\\
$(272,17)$ & $16$ & $16$ & $6$ & $(274,2)$ & $16$ & $137$ & $6$ & $(275,25)$ & $13$ & $11$ & $4$\\
$(278,2)$ & $17$ & $139$ & $5$ & $(291,97)$ & $16$ & $3$ & $6$ & $(295,5)$ & $15$ & $59$ & $3$\\
$(297,27)$ & $16$ & $11$ & $6$ & $(298,2)$ & $18$ & $149$ & $7$ & $(300,4)$ & $19$ & $75$ & $7$\\
$(300,75)$ & $19$ & $4$ & $7$ & $(302,2)$ & $19$ & $151$ & $5$ & $(303,3)$ & $17$ & $101$ & $3$\\
$(303,101)$ & $10$ & $3$ & $3$ & $(305,5)$ & $14$ & $61$ & $4$ & $(305,61)$ & $12$ & $5$ & $4$\\
$(319,11)$ & $15$ & $29$ & $4$ & $(319,29)$ & $12$ & $11$ & $4$ & $(321,3)$ & $18$ & $107$ & $4$\\
$(321,107)$ & $12$ & $3$ & $4$ & $(323,17)$ & $15$ & $19$ & $5$ & $(329,7)$ & $16$ & $47$ & $3$\\
$(329,47)$ & $11$ & $7$ & $3$ & $(335,5)$ & $16$ & $67$ & $4$ & $(335,67)$ & $17$ & $5$ & $4$\\
$(341,11)$ & $14$ & $31$ & $4$ & $(341,31)$ & $16$ & $11$ & $4$ & $(355,5)$ & $18$ & $71$ & $4$\\
$(371,7)$ & $17$ & $53$ & $5$ & $(371,53)$ & $18$ & $7$ & $5$ & $(377,13)$ & $16$ & $29$ & $5$\\
$(377,29)$ & $14$ & $13$ & $5$ & $(391,17)$ & $16$ & $23$ & $5$ & $(391,23)$ & $18$ & $17$ & $5$\\
 
    \hline

\caption*{}
\end{longtable}
\end{center}
\end{prop}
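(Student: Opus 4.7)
The plan is a uniform application of the Castelnuovo--Severi inequality (\Cref{tm:CS}). Suppose for contradiction that $X := X_0^{+d}(N)$ admits a degree-$4$ morphism $\pi_Y : X \to \PP^1$ over $\C$. Each row of the table prescribes a second Atkin--Lehner index $d'$ (with $d'\mid N$, $(d',N/d')=1$, $d'\neq d$), furnishing a degree-$2$ quotient $\pi_Z : X \to Z := X_0(N)/\langle w_d, w_{d'}\rangle$ of genus $g'$. Taking $m=4$, $n=2$ in \Cref{tm:CS} yields $g \leq 2g' + 3$, and inspection of the table shows that in every listed row $g > 2g' + 3$, so the numerical conclusion of CS fails.

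The remaining issue is justifying the no-common-factor hypothesis. If $\pi_Y$ and $\pi_Z$ both factored through an intermediate $X \to X'$ of degree $>1$, then since $\deg\pi_Z = 2$ the only possibility is $X' \cong Z$ with $\pi_Y$ descending to a degree-$2$ morphism $Z \to \PP^1$; equivalently, $Z$ would have to be hyperelliptic over $\C$. Thus the argument closes precisely when each tabulated quotient $Z$ is non-hyperelliptic. Because every curve in question has genus $\geq 2$, the uniqueness of a $g^1_2$ forces geometric and arithmetic hyperellipticity to coincide, so it suffices to verify non-hyperellipticity over $\Q$.

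I would verify this by cross-referencing with the Furumoto--Hasegawa \cite{FurumotoHasegawa1999} classification of hyperelliptic quotients $X_0(N)/W$: since every $g'$ in the table lies between $3$ and $8$, any potential hyperelliptic coincidence must appear on their list, and one simply confirms that the chosen $(N,d,d')$ does not. In the borderline cases a direct \texttt{Magma} check is available, either via the graded Betti numbers of a canonical model (in the spirit of \Cref{bettisection}) or by verifying that no effective degree-$2$ divisor on $Z$ has Riemann--Roch dimension $\geq 2$. I expect this bookkeeping to be the main obstacle: well over a hundred rows, each requiring the right admissible choice of $d'$ and an independent confirmation that the resulting intermediate quotient is non-hyperelliptic. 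Once this is done, \Cref{tm:CS} delivers the contradiction $g \leq 2g' + 3$, and hence $\gon_\C X_0^{+d}(N) \geq 5$ for every pair in the table.
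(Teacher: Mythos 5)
Your core argument coincides with the paper's: apply the Castelnuovo--Severi inequality (\Cref{tm:CS}) to a hypothetical degree-$4$ map and the degree-$2$ quotient map $X_0^{+d}(N)\to X_0(N)/\langle w_d,w_{d'}\rangle$, observe that $g>2g'+3$ in every row, and dispose of the common-factorization case by noting that it would force the quotient to be hyperelliptic, which is excluded by \cite{FurumotoHasegawa1999}. That part is sound and is exactly what the paper does.

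However, there is a genuine gap at the final step: the non-existence of a degree-$4$ morphism to $\PP^1$ does not imply $\gon_\C X_0^{+d}(N)\geq 5$. It does rule out gonality $2$ (a degree-$2$ map composed with a degree-$2$ self-map of $\PP^1$ yields a degree-$4$ map), but it does not rule out gonality $3$. Indeed, by the same Castelnuovo--Severi inequality, a trigonal curve of genus $\geq 7$ admits \emph{no} degree-$4$ morphism to $\PP^1$ whatsoever: a degree-$3$ and a degree-$4$ map cannot factor through a common cover of degree $>1$, so such a coexistence would force $g\leq(3-1)(4-1)=6$. Since every curve in the table has genus $\geq 10$, a trigonal entry would pass your degree-$4$ test vacuously while having gonality $3$, and your conclusion would be false for it. You must therefore separately verify that none of the listed curves $X_0^{+d}(N)$ is trigonal (or hyperelliptic) over $\C$; the paper does this by citing the classifications in \cite{FurumotoHasegawa1999} and \cite{HasegawaShimura1999} as the first sentence of its proof. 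With that one addition, your argument matches the paper's.
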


\begin{proof}
    The results of \cite{FurumotoHasegawa1999} and \cite{HasegawaShimura1999} tell us that these curves $X_0^{+d}(N)$ are not hyperelliptic nor trigonal over $\C$. Suppose there exists a degree $4$ map from $X_0^{+d}(N)$ to $\PP^1$. We apply the Castelnuovo-Severi inequality with $f$ and a degree $2$ quotient map $\pi: X_0^{+d}(N)\to X_0(N)/\left<w_d,w_{d'}\right>$. 
    
    Since $g(X_0^{+d}(N))$ is too high in all cases here, $f$ has to factor through $\pi$ (because $\deg\pi=2$ is a prime). In that case we would have $\gon_\C(X_0(N)/\left<w_d,w_{d'}\right>)=2$ and the quotient curve $X_0(N)/\left<w_d,w_{d'}\right>$ would need to be elliptic or hyperelliptic. However, we can again use \cite{FurumotoHasegawa1999} to eliminate this possibility.
\end{proof}
The genera of all these curves can be obtained using Philippe Michaud Jacobs's function \texttt{genus\_quo} which is available on Github. A code for the curve $X_0(132)/\left<w_3,w_{44}\right>$ is

    \begin{lstlisting}[language=Python]
    
> load "new_models.m";
> genus_quo(132,[3,44]);
3 

    \end{lstlisting}

Another helpful tool here is the Tower theorem \cite[Theorem 2.1]{NguyenSaito}. We use its corollary \cite[Corollary 4.6.]{NajmanOrlic22} which says that for curves of genus $\geq10$, the existence of a degree $4$ map to $\mathbb{P}^1$ over $\C$ is equivalent with the existence of a rational degree $4$ map to $\mathbb{P}^1$.

In order to bound the number of levels we need to check, we can use the theorem by Kim and Sarnak, mentioned in the Introduction.

\begin{thm}[Appendix 2 to \cite{Kim2002}]
    Let $X_\Gamma$ be the algebraic curve corresponding to a congruence subgroup $\Gamma\subseteq \SL_2(\Z)$ of index
    $$D_\Gamma=[\SL_2(\Z):\pm\Gamma].$$
    If $X_\Gamma$ is $d$-gonal, then $D_\Gamma\leq \frac{12000}{119}d$.
\end{thm}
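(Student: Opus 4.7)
The plan is to derive the stated gonality bound by combining two inputs: Abramovich's geometric inequality relating the $\C$-gonality of a compact Riemann surface to the smallest positive eigenvalue of the Laplace–Beltrami operator, and the Kim–Sarnak spectral bound for $\lambda_1$ on congruence modular curves. Neither ingredient is specific to modular curves individually; the bridge between them is the area formula for $\SL_2(\Z)\backslash\mathbb{H}$.

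First I would invoke Abramovich's inequality: for a compact hyperbolic Riemann surface $X=\Gamma\backslash\mathbb{H}$ one has
$$\gon_\C(X)\geq\frac{\lambda_1(X)}{8\pi}\cdot\operatorname{Area}(X).$$
This is proved by taking a gonal morphism $\phi\colon X\to\PP^1$, pulling back the Fubini–Study form, and testing the variational (Rayleigh quotient) characterization of $\lambda_1$ against a function built from the coordinates of $\phi$. The degree of $\phi$ controls the Dirichlet energy on top, while $\operatorname{Area}(X)$ controls the $L^{2}$-norm on the bottom, and the non-constancy of $\phi$ forces the bound.

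Second, I would compute the area: for any congruence subgroup $\Gamma\subseteq\SL_2(\Z)$ of index $D_\Gamma$, the hyperbolic area of $X_\Gamma$ equals $(\pi/3)\cdot D_\Gamma$, since $\SL_2(\Z)\backslash\mathbb{H}$ has hyperbolic area $\pi/3$ and the cover $X_\Gamma\to X(1)$ has degree $D_\Gamma$. Substituting into Abramovich's inequality yields
$$\gon_\C(X_\Gamma)\geq\frac{\lambda_1(X_\Gamma)}{24}\cdot D_\Gamma,$$
so the theorem reduces to a uniform spectral lower bound $\lambda_1(X_\Gamma)\geq 119/500$, valid for every congruence $\Gamma$.

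Third, and this is the deep step, I would invoke the Kim–Sarnak bound $\lambda_1(X_\Gamma)\geq 975/4096$, which comfortably exceeds $119/500$ and therefore gives the claimed inequality $D_\Gamma\leq(12000/119)\cdot\gon_\C(X_\Gamma)$ for every $d$-gonal $X_\Gamma$. The Kim–Sarnak estimate in turn comes from the bound $\theta\leq 7/64$ on the Satake parameters of unramified local components of cuspidal automorphic representations of $\GL_2/\Q$, combined with the Jacquet–Gelbart-style translation from $\mathrm{GL}_2$-spectral parameters to Laplacian eigenvalues on $Y_0(N)$. The main obstacle is precisely this spectral input: any nontrivial progress toward Ramanujan–Selberg (and in particular the $7/64$ bound) requires Kim's functorial lift $\operatorname{Sym}^4\colon \GL_2\to\GL_5$ together with Kim–Shahidi's symmetric cube, so the bulk of the difficulty is importing this automorphic machinery; after that, the gonality inequality itself is just arithmetic.
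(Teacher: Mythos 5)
This theorem is imported by the paper from Appendix 2 of \cite{Kim2002} without proof, so there is no internal argument to compare against; your reconstruction is the standard derivation and it is essentially correct. The chain Li--Yau/Abramovich ($\gon_\C(X_\Gamma)\geq\frac{\lambda_1}{8\pi}\operatorname{Area}(X_\Gamma)$), the area computation $\operatorname{Area}(X_\Gamma)=\frac{\pi}{3}D_\Gamma$ with $D_\Gamma=[\SL_2(\Z):\pm\Gamma]$ equal to the degree of $X_\Gamma\to X(1)$, and the Kim--Sarnak bound $\lambda_1\geq\frac14-\left(\frac{7}{64}\right)^2=\frac{975}{4096}$ coming from the $\Sym^4$ lift is exactly how the constant $\frac{12000}{119}$ arises: one rounds $\frac{975}{4096}=0.2380371\ldots$ down to $0.238=\frac{119}{500}$ and divides $24$ by it.

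Two small points you gloss over. First, $X_\Gamma$ is not compact (it has cusps), so the Rayleigh-quotient argument cannot be run verbatim on a ``compact hyperbolic surface''; Abramovich's paper handles the finite-volume, non-compact case (where the continuous spectrum begins at $\frac14$), and one also needs the M\"obius-centering step in the Li--Yau argument to make the pulled-back coordinate functions orthogonal to the constants before testing $\lambda_1$. Second, the margin $\frac{975}{4096}>\frac{119}{500}$ is razor-thin (the difference is about $3.7\times10^{-5}$), not ``comfortable''; the inequality holds, but only because $\frac{119}{500}$ was chosen as a rounding of the Kim--Sarnak constant in the first place. Neither point affects the validity of your outline.
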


\begin{cor}
    The curve $X_0^{+d}(N)$ is not $\C$-tetragonal for $N\geq807$.
\end{cor}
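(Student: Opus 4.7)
My plan is to argue by contradiction, transferring the gonality bound from $X_0^{+d}(N)$ up to $X_0(N)$ itself so that the Kim--Sarnak inequality can be applied to the familiar congruence subgroup $\Gamma_0(N)$. First, I would assume that $X_0^{+d}(N)$ is $\C$-tetragonal, so that there exists a degree $4$ morphism $X_0^{+d}(N)\to\PP^1$ defined over $\C$. Composing with the natural degree $2$ quotient $X_0(N)\to X_0^{+d}(N)$ (the map by $w_d$) and invoking \Cref{poonen}(vi), I would conclude
$$\gon_\C X_0(N)\leq 2\cdot 4=8.$$

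Next, I would apply the Kim--Sarnak theorem to $\Gamma=\Gamma_0(N)$. The index $D_{\Gamma_0(N)}$ in $\PSL_2(\Z)$ equals $\psi(N)$, so the bound forces
$$\psi(N)\leq \frac{12000}{119}\cdot 8=\frac{96000}{119}<807.$$
Finally, combining this with the elementary inequality
$$\psi(N)=N\prod_{p\mid N}\Bigl(1+\tfrac{1}{p}\Bigr)\geq N$$
yields $N<807$, contradicting the hypothesis $N\geq 807$.

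The main advantage of pulling the bound back to $X_0(N)$ rather than applying Kim--Sarnak directly to the Fuchsian group attached to $X_0^{+d}(N)$ is that it avoids any ambiguity in defining $D_\Gamma$ for the Atkin--Lehner extension of $\Gamma_0(N)$, which is not literally a subgroup of $\SL_2(\Z)$. There is no serious obstacle in this argument: once the reduction to $X_0(N)$ is in place, the conclusion is essentially a one-line numerical consequence of Kim--Sarnak together with the trivial lower bound $\psi(N)\geq N$.
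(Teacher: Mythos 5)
Your proof is correct and is essentially identical to the paper's: both arguments compose the degree $4$ map with the degree $2$ quotient $X_0(N)\to X_0^{+d}(N)$ to get a degree $8$ map from $X_0(N)$, apply Kim--Sarnak to $\Gamma_0(N)$ (using $-I\in\Gamma_0(N)$ so that $D_{\Gamma_0(N)}=\psi(N)$), and conclude from $\psi(N)\geq N$ that $N\leq\frac{96000}{119}<807$. No differences worth noting.
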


\begin{proof}
    Suppose $X_0^{+d}(N)$ is $\C$-tetragonal. Then $X_0(N)$ has a degree $8$ map to $\mathbb{P}^1$. Since $-I\in\Gamma_0(N)$, we have that $\psi(N)=D_{\Gamma_0(N)}\leq\frac{12000}{119}\cdot8$ (here $\psi(N)=N\prod_{q\mid N}(1+\frac{1}{q})$, as mentioned in \Cref{lemmaogg}).
\end{proof}

In \Cref{Fpsection} we determined the $\Q$-gonality of curves $X_0^{+d}(N)$. For those curves of genus $g\geq10$ we can use the Tower theorem to determine the existence of a degree $4$ map to $\PP^1$ over $\C$, as mentioned at the beginning of this section.

We used \texttt{Magma} to list all curves $X_0^{+d}(N)$ with $d<N<807$ with genus $5\leq g\leq 9$ (those of genus $g\leq4$ surely have $\gon_\Q\leq4$ by \Cref{poonen}(iv)). Moreover, in \Cref{degree4mapsection} and \Cref{CSprop} some of these curves were solved. 

This leaves us with reasonably many cases that are not yet solved. The only pairs $(N,d)$ we need to check are in the table below. One can check in the Tables \ref{tab:main1}, \ref{tab:main2}, \ref{tab:main3} that the $\C$-gonality of all the other curves has been bounded in \Cref{CSprop} or in Sections \ref{Fpsection} and \ref{degree4mapsection}.

\begin{table}[ht]
\centering
\begin{tabular}{|c|c||c|c||c|c||c|c|}
\hline
$(N,d)$ & $g(X_0^{+d}(N))$ & $(N,d)$ & $g(X_0^{+d}(N))$ & $(N,d)$ & $g(X_0^{+d}(N))$ & $(N,d)$ & $g(X_0^{+d}(N))$\\
    \hline

    $(102,6)$ & $8$ & $(102,34)$ & $8$ & $(114,6)$ & $9$ & $(114,57)$ & $8$\\
    $(120,3)$ & $9$ & $(130,5)$ & $9$ & $(140,20)$ & $8$ & $(148,37)$ & $9$\\
    $(152,8)$ & $8$ & $(152,19)$ & $9$ & $(154,7)$ & $9$ & $(154,77)$ & $9$\\
    $(160,5)$ & $9$ & $(162,2)$ & $8$ & $(162,81)$ & $7$ & $(164,4)$ & $9$\\
    $(172,43)$ & $9$ & $(174,87)$ & $8$ & $(175,7)$ & $8$ & $(175,25)$ & $8$\\
    $(178,89)$ & $8$ & $(183,3)$ & $9$ & $(185,5)$ & $9$ & $(185,37)$ & $9$\\
    $(187,11)$ & $9$ & $(187,17)$ & $7$ & $(196,49)$ & $9$ & $(202,101)$ & $9$\\
    $(203,7)$ & $9$ & $(214,107)$ & $9$ & $(225,9)$ & $9$ & $(238,119)$ & $7$\\
    $(245,49)$ & $9$ & $(248,31)$ & $9$ & $(249,83)$ & $8$ & $(262,131)$ & $9$\\
    $(267,89)$ & $9$ & $(295,59)$ & $9$ & & & &\\

    \hline
\end{tabular}
\vspace{5mm}
\caption{Curves $X_0^{+d}(N)$ of level $N\leq806$ and genus at most $9$ that are not $\Q$-tetragonal.}
\label{tablebetti}
\end{table}

We will use graded Betti numbers $\beta_{i,j}$ to disprove the existence of a degree $4$ morphism to $\mathbb{P}^1$. We will follow the notation in \cite[Section 1.]{JeonPark05}. The results we mention can be found there and in \cite[Section 3.1.]{NajmanOrlic22}.

\begin{definition}
    For a curve $X$ and divisor $D$ of degree $d$, $g_d^r$ is a subspace of the Riemann-Roch space $L(D)$ such that $\dim V=r+1$.
\end{definition}

Therefore, we want to determine whether $X_0^{+d}(N)$ has a $g_4^1$. Green's conjecture relates graded Betti numbers $\beta_{i,j}$ with the existence of $g_d^r$.

\begin{conj}[Green, \cite{Green84}]
    Let $X$ be a curve of genus $g$. Then $\beta_{p,2}\neq 0$ if and only if there exists a divisor $D$ on $X$ of degree $d$ such that a subspace $g_d^r$ of $L(D)$ satisfies $d\leq g-1$, $r=\ell(D)-1\geq1$, and $d-2r\leq p$.
\end{conj}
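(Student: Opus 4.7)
The statement is Green's conjecture on canonical syzygies, and the ``only if'' direction (nonvanishing of $\beta_{p,2}$ forces existence of a linear series of Clifford index at most $p$) is a deep open problem. My plan therefore splits the biconditional into its two implications and treats them by different methods, using the identification $\beta_{p,2}(X) = \dim_{\mathbb{C}} K_{p,1}(X, K_X)$ due to Green, which reduces the problem to Koszul cohomology of the canonical bundle on the canonically embedded curve $X \subset \mathbb{P}^{g-1}$.

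For the ``if'' direction I would follow Green's original nonvanishing argument. Given $D$ with $\deg D = d \leq g-1$, $\ell(D) = r+1$, and $d - 2r \leq p$, Riemann--Roch yields $\ell(K_X - D) = g - d + r \geq r+1$. Pick independent sections $s_0, \ldots, s_r \in H^0(D)$ and $t_0, \ldots, t_{g-d+r-1} \in H^0(K_X - D)$; the products $s_i t_j$ sit in $H^0(K_X)$ and span a subspace whose dimension is controlled by the base-point-free pencil trick. Green's wedge-product recipe then produces an explicit nonzero element of $\wedge^p H^0(K_X) \otimes H^0(K_X)$ annihilated by the Koszul differential, giving a nonzero class in $K_{p,1}(X, K_X)$ exactly when the numerical hypothesis $d - 2r \leq p$ is met.

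The ``only if'' direction is Green's conjecture itself; no general proof is available, and this is the principal obstacle. The partial results I would invoke are Voisin's K3-degeneration proof for generic curves of given genus, the Aprodu--Farkas extension to arbitrary curves lying on a K3 surface, and the Hirschowitz--Ramanan/Teixidor results for curves of maximal gonality. For the specific modular curves $X_0^{+d}(N)$ relevant to this paper (all of genus at most $9$ by \Cref{tablebetti}), the cleanest route is direct computational verification: produce the canonical model over $\mathbb{Q}$, compute the minimal graded free resolution in \texttt{Magma}, and cross-reference the resulting Betti numbers $\beta_{p,2}$ against the list of special linear series and Clifford-index data produced in \Cref{Fpsection} through \Cref{degree4mapsection}.

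The main obstacle is therefore the ``only if'' implication, which is genuinely a conjecture and cannot be established in full by any method known to me. The paper sidesteps this by invoking only the contrapositive of the ``if'' direction — vanishing of $\beta_{p,2}$ precludes the existence of a $g_d^r$ with $d-2r \leq p$ — and this unconditional half of the statement is enough to rule out a $g_4^1$ on each remaining curve in \Cref{tablebetti}, which is the Betti-number application needed in \Cref{bettisection}.
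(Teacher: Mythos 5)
Your assessment matches the paper's treatment exactly: the statement is Green's conjecture, which the paper does not prove but merely states, and the only part that is actually used (and actually known) is the Green--Lazarsfeld ``if'' direction, which the paper records separately and applies in contrapositive form to rule out a $g_4^1$ whenever $\beta_{2,2}=0$. Your sketch of the nonvanishing construction for that direction is the standard one, and your identification of the ``only if'' implication as the genuinely open content that the paper never needs is correct.
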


The "if" part of this conjecture was proved in the same paper.

\begin{thm}[Green and Lazarsfeld, Appendix to \cite{Green84}]
    Let $X$ be a curve of genus $q$. If $\beta_{p,2}=0$, then there does not exist a divisor $D$ on $X$ of degree $d$ such that a subspace $g_d^r$ of $L(D)$ satisfies $d\leq g-1$, $r\geq1$, and $d-2r\leq p$.
\end{thm}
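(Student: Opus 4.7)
This is the ``if'' direction of Green's conjecture in the range of pencils and nets, established by Green and Lazarsfeld in the appendix to~\cite{Green84}. The plan is to argue the contrapositive: given a $g_d^r$ on $X$ with $d \leq g-1$, $r \geq 1$, and $d - 2r \leq p$, I would construct an explicit non-zero class in the Koszul cohomology group that computes $\beta_{p,2}$. Throughout, assume $X$ is non-hyperelliptic so that the canonical linear system embeds $X \hookrightarrow \mathbb{P}^{g-1}$; the hyperelliptic case reduces to a direct computation on the rational normal curve of degree $g-1$.

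The first step is to use the $g_d^r$ geometrically. Each effective divisor $D_t \in |D|$ spans, by geometric Riemann--Roch together with the hypothesis $d \leq g-1$, a linear subspace of the expected dimension $d-r-1$ in $\mathbb{P}^{g-1}$. As $t$ varies over the parameter $\mathbb{P}^r$, these subspaces sweep out a rational normal scroll $S \subseteq \mathbb{P}^{g-1}$ of codimension $c = g - d + r - 1$ containing $X$. (If the series is not base-point free one first passes to the moving part; if $r > 1$ the construction is essentially the same, using the join of fibers of the map to $\mathbb{P}^r$.)

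The second step is the computation on $S$. Rational normal scrolls admit an explicit Eagon--Northcott minimal free resolution, from which one reads off that $\beta_{p,2}(S) \neq 0$ precisely in the range $1 \leq p \leq c$. To transfer this non-vanishing from $S$ to $X$, one invokes the Green--Lazarsfeld ``syzygy lemma'': an appropriate cohomology vanishing statement on $S$ implies that the restriction map in Koszul cohomology is injective in the relevant bidegree, so $\beta_{p,2}(S) \neq 0$ forces $\beta_{p,2}(X) \neq 0$. A short numerical check with $c = g - d + r - 1$ confirms that the resulting range of $p$ is exactly $\{p : p \geq d - 2r\}$.

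The main obstacle, and the technical heart of the argument, is the injectivity of the restriction map in Koszul cohomology. This reduces to a vanishing statement on $S$ for twists of wedge powers of the tautological subbundle by the ideal sheaf $\mathcal{I}_{X/S}$, which requires explicit control over $\operatorname{Pic}(S) = \mathbb{Z}^2$ (generated by the hyperplane class and the ruling class) together with a careful analysis of how the equations cutting $X$ out of $S$ split into graded pieces of the symmetric algebra on $H^0(S, \mathcal{O}_S(1))$. Once these vanishings are in place, the argument concludes by a diagram chase in the bigraded Koszul complex, matching up the non-zero scrollar class with a non-zero class on $X$ in bidegree $(p,2)$.
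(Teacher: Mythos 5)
First, a point of reference: the paper does not prove this statement at all --- it is quoted, with attribution, from the appendix to \cite{Green84} (the Green--Lazarsfeld nonvanishing theorem), so there is no in-paper argument to compare against. Judged on its own, your sketch follows the later ``scrollar'' route (Schreyer, Eisenbud) rather than the cited one. The original proof is considerably more elementary and uses no scrolls: from the $g_d^r$ one forms the decomposition $K_X\cong \mathcal{O}_X(D)\otimes K_X(-D)$, notes that Riemann--Roch together with $d\le g-1$ gives $h^0(K_X(-D))=g-d+r\ge r+1\ge 2$, writes down an explicit Koszul cocycle in $K_{r_1+r_2-1,1}(X,K_X)$ from bases of the two spaces of sections (here $r_1=r$, $r_2=g-d+r-1$, so $r_1+r_2-1=g-d+2r-2$), shows directly that it is not a coboundary, and then applies the duality $K_{p,2}(X,K_X)\cong K_{g-2-p,1}(X,K_X)^{*}$ to land exactly at $p=d-2r$; propagation of nonvanishing in the linear strand yields the whole range $p\ge d-2r$.

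As written, your sketch also contains a genuine error and a genuine gap. The error: a rational normal scroll is a variety of minimal degree, is $2$-regular, and its Eagon--Northcott resolution is purely linear, so in the notation of the statement $\beta_{p,2}(S)=0$ for \emph{every} $p$; what is nonzero is the linear strand $\beta_{p,1}(S)$ for $1\le p\le c$. Consequently ``transferring $\beta_{p,2}(S)\neq 0$ to $X$'' cannot work: the correct mechanism transfers \emph{linear} syzygies of $S$ to linear syzygies of $X$ and only then invokes duality on the canonical curve to reach the quadratic strand --- a step your outline omits entirely, and without which the numerology cannot come out right. The gap: the injectivity of the restriction map on Koszul cohomology is the entire technical content of the scrollar approach and you only assert it; moreover the claim that the construction ``is essentially the same for $r>1$'' does not hold --- the union of the spans of the divisors of a $g_d^r$ with $r\ge 2$ is a $(d-1)$-dimensional family, not a scroll of codimension $g-d+r-1$ (already for $r=2$ your dimension count fails), and obtaining the full bound $d-2r\le p$ rather than the weaker $d-r-1\le p$ coming from an extracted pencil is precisely why the original argument must use all $r+1$ sections of $\mathcal{O}_X(D)$ simultaneously.
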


\begin{cor}
    Let $X$ be a curve of genus $g\geq5$ with $\beta_{2,2}=0$. Then $\textup{gon}_\C(X)\geq5$.
\end{cor}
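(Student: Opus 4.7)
The argument will be a direct application of the Green--Lazarsfeld theorem quoted immediately above. The plan is to suppose, for contradiction, that $\gon_\C(X) \leq 4$, so that $X$ admits a nonconstant morphism $\pi : X \to \PP^1_\C$ of some degree $k$ with $2 \leq k \leq 4$. Pulling back a point of $\PP^1$ yields an effective divisor $D$ on $X$ of degree $k$ with $\ell(D) \geq 2$; inside $L(D)$ one finds a pencil, that is, a $g_k^1$ with $r = 1$.

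The main step is then to verify the numerical hypotheses of the Green--Lazarsfeld theorem with the choice $p = 2$. Since $g \geq 5$, one has $d = k \leq 4 \leq g - 1$; clearly $r = 1 \geq 1$; and $d - 2r = k - 2 \leq 2 = p$. The Green--Lazarsfeld theorem combined with $\beta_{2,2} = 0$ therefore forbids the existence of any such divisor $D$, giving the desired contradiction. Equivalently, one may first augment $D$ by $4 - k$ arbitrary base points to produce a $g_4^1$ of degree exactly $4$, and then apply the theorem with $(d, r, p) = (4, 1, 2)$.

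There is no serious obstacle to this argument. The hypothesis $g \geq 5$ is precisely the threshold needed to guarantee $d \leq g - 1$ in the worst case $k = 4$, and the inequality $k - 2 \leq 2$ holds automatically for every $k \leq 4$, so no case analysis on the gonality $k$ is required. The statement is thus an immediate corollary of the Green--Lazarsfeld theorem, and its usefulness in the present paper lies in the fact that $\beta_{2,2}$ is computable (for instance in \texttt{Magma}) from an explicit canonical model of $X_0^{+d}(N)$, which will be applied to the remaining curves of genus $5 \leq g \leq 9$ listed in Table~\ref{tablebetti}.
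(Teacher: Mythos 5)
Your argument is correct and is exactly the intended one: the paper states this corollary as an immediate consequence of the Green--Lazarsfeld theorem, and your verification of the numerical hypotheses ($d\leq 4\leq g-1$, $r=1$, $d-2r\leq 2$) for a $g_k^1$ with $k\leq 4$ is precisely what makes it immediate. No discrepancies with the paper's approach.
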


\begin{cor}\label{bettiprop}
    The curve $X_0^{+d}(N)$ is not tetragonal for all $(N,d)$ in \Cref{tablebetti}.
\end{cor}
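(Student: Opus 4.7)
The plan is to apply the Green--Lazarsfeld theorem (stated just above the corollary) directly: for each pair $(N,d)$ listed in \Cref{tablebetti}, it suffices to verify that the graded Betti number $\beta_{2,2}$ of the canonical embedding of $X_0^{+d}(N)$ vanishes. Once this is established for a curve of genus $g\geq 5$, the preceding corollary of Green--Lazarsfeld immediately yields $\textup{gon}_\C(X_0^{+d}(N))\geq 5$, and every curve in the table has genus between $7$ and $9$, so the hypothesis $g\geq 5$ is satisfied.

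First I would obtain a canonical model of each quotient $X_0^{+d}(N)$ in $\mathbb{P}^{g-1}$. Concretely, I would compute a basis of $S_2(\Gamma_0(N))^{+d}$, the subspace of weight $2$ cusp forms fixed by the Atkin--Lehner involution $w_d$, using \texttt{Magma}'s modular symbols machinery, and then use the canonical map $X_0^{+d}(N)\to\mathbb{P}^{g-1}$ given by this basis. Since all curves in the table are non-hyperelliptic (their genus-$4$ analogues having been ruled out in the previous sections by the Castelnuovo--Severi analysis together with \Cref{quotienthyper}), the canonical map is an embedding and the image is a projectively normal canonical curve, so the graded Betti table of its homogeneous coordinate ring computes the desired invariant.

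Next, for each such canonical model I would call \texttt{Magma}'s \texttt{MinimalFreeResolution} (or \texttt{BettiNumbers}) routine on the homogeneous ideal cutting out the image, read off $\beta_{2,2}$, and check that it equals $0$. With $\beta_{2,2}=0$ the Green--Lazarsfeld vanishing theorem applied with $d=4$, $r=1$, $p=2$ rules out a $g^1_4$ on $X_0^{+d}(N)$ (note $d-2r=2\leq p$ and $d=4\leq g-1$ since $g\geq 7$ for all entries in the table), which is exactly the assertion that $X_0^{+d}(N)$ carries no degree $4$ morphism to $\mathbb{P}^1$ over $\C$.

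The main obstacle is a computational one rather than a conceptual one: for some of the levels (e.g.\ $N=295$, $N=267$, $N=262$) the genus is $9$ and the canonical ideal lives in $\mathbb{P}^8$ with many quadratic and cubic generators, so computing the full minimal free resolution can be memory-intensive. If the direct computation of $\beta_{2,2}$ for the canonical ring becomes infeasible, a fallback is to work modulo a prime $p$ of good reduction: by semicontinuity of Betti numbers, $\beta_{2,2}(X_{\F_p})=0$ implies $\beta_{2,2}(X_\Q)=0$, and reducing mod $p$ often makes the Gr\"obner basis computation much more tractable. The verification is carried out uniformly for all pairs $(N,d)$ in \Cref{tablebetti} by the \texttt{Magma} scripts available at the repository cited in the introduction.
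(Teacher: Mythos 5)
Your proposal is correct and matches the paper's approach exactly: the paper's proof consists of the single line ``For all these curves we compute $\beta_{2,2}=0$'' followed by the application of the Green--Lazarsfeld corollary stated just above, and your write-up simply spells out the computational details (canonical model from $w_d$-fixed cusp forms, minimal free resolution, reduction mod $p$ as a fallback) that the paper delegates to its \texttt{Magma} repository.
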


\begin{proof}
    For all these curves we compute $\beta_{2,2}=0$. We present the \texttt{Magma} code for the curve $X_0^{+6}(102)$.

    \begin{lstlisting}[language=Python]

> X:=X0NQuotient(102,[6]);
> A:=QuotientModule(DefiningIdeal(X));
> BettiTable(A);
[
    [ 1, 0, 0, 0, 0, 0, 0 ],
    [ 0, 15, 35, 21, 0, 0, 0 ],
    [ 0, 0, 0, 21, 35, 15, 0 ],
    [ 0, 0, 0, 0, 0, 0, 1 ]
]
0
> BettiNumber(A,2,4);
0

    \end{lstlisting}

    The function \texttt{BettiTable()} returns a Betti table $S$ and a shift $s$. This is designed so that if $A$ is non-zero, then S[1, 1] is always non-zero and S[i, j] equals \texttt{BettiNumber}(A, i-1, (i-1)+(j-1)+s). (So the degrees are shifted by s.) \cite{magma} We need to subtract $1$ since the indexation in the table starts with $1$ instead of $0$. In the given example we have $s=0$ and
    
    \[\texttt{BettiNumber(A,2,4)} = \texttt{BettiTable(A)[3][3]} = 0,\]  \[\texttt{BettiNumber(A,1,2)} = \texttt{BettiTable(A)[2][2]} = 15.\]

\end{proof}

\begin{remark}
    If the reader wishes to further check these calculations, \cite[Table 1]{Schreyer1986} gives all possible Betti tables for curves of genus $g\leq8$. The genus $8$ curve $X_0^{+6}(102)$ would fall under the general genus $8$ case in that table.

    Notice that there the Betti numbers are indexed differently than in \cite{JeonPark05} and this paper. For example, Green's conjecture in \cite{Schreyer1986} is stated differently. The indexation there matches with the indexation in the \texttt{Magma} function \texttt{BettiNumber} (our $\beta_{2,2}$ is there indexed as $\beta_{2,4}$).
\end{remark}

\section{Proofs of the main theorems}

\begin{proof}[Proof of \Cref{trigonalthm}]
    Hasegawa and Shimura \cite{HasegawaShimura1999} already solved the cases when $g(X_0^+(N))\neq4$. \Cref{Fp_trigonal} and \Cref{trigonalmap} solve the cases when the genus is equal to $4$.
\end{proof}

\begin{proof}[Proof of \Cref{tetragonalthm}]
    We can suppose that the genus of the curve $X_0^{+d}(N)$ is at least $4$, otherwise the $\Q$-gonality is at most $3$ due to \Cref{poonen}.

    The results of \cite{FurumotoHasegawa1999} give us all hyperelliptic quotients of $X_0(N)$ and the results of \cite{HasegawaShimura1999} give us all $\C$-trigonal curves $X_0^{+d}(N)$. There are exactly $8$ cases when the curve $X_0^{+d}(N)$ is $\C$-trigonal of genus $g\geq5$, namely
    $$(N,d)\in\{(117,13),(122,122),(146,146),(147,3),(162,162),(164,164),(181,181),(227,227)\},$$
    and in these cases the Tower theorem implies that the $\Q$-gonality is also equal to $3$.

    For genus $4$ curves listed in the statement of the theorem, we used \Cref{Fp_trigonal} to prove that there are no degree $3$ rational maps to $\mathbb{P}^1$. Therefore, the $\Q$-gonality of these curves must be equal to $4$.

    We can now suppose that the curve $X_0^{+d}(N)$ is of genus $g\geq5$ and is not hyperelliptic nor trigonal over $\C$. For the curves listed in the theorem, in \Cref{degree4mapsection} we find a rational degree $4$ map to $\mathbb{P}^1$. In the remaining cases, we prove in Sections \ref{Fpsection} and \ref{C-gonality_section} that there are no degree $4$ maps to $\mathbb{P}^1$, and so $\textup{gon}_\C X_0^{+d}(N)>4$ in these cases.
\end{proof}

We summarize the results of this paper in the following table. For each value of $N$, we give links to all results used to solve the curves $X_0^{+d}(N)$ for $d\neq N$. We skip the curves of genus at most $3$ and in the table we write $g\leq3$ when we want to say that all curves $X_0^{+d}(N)$ for that level $N$ are of genus $g\leq3$. We also skip the levels $N$ that are prime powers.

\clearpage
\begin{table}[ht]
\centering
\begin{tabular}{|c|c||c|c|}
  \hline
  $N$ & Results used & $N$ & Results used\\
  \hline

$\leq59$ & $g\leq3$ & $60$ & $g\leq3$ for $d=4,15,20$, \ref{Fp_trigonal}, \cite{FurumotoHasegawa1999} for $d=12$\\
$61$ & & $62$ & $g\leq3$ for $d=31$, {\cite{FurumotoHasegawa1999}} for $d=2$\\
$63$ & $g\leq3$ & $64$ & \\
$65$ & $g\leq3$ & $66$ & \ref{Fp_trigonal}, \ref{quotienthyper}, \ref{trigonalmap}, \cite{FurumotoHasegawa1999} for $d=6$\\
$67$ & & $68$ & $g\leq3$ for $d=4$, \ref{Fp_trigonal}\\
$69$ & $g\leq3$ for $d=23$, \cite{FurumotoHasegawa1999} for $d=3$ & $70$ & $g\leq3$ for $d=14,35$, \ref{Fp_trigonal}, \ref{quotientell}, \ref{quotienthyper}, \cite{FurumotoHasegawa1999} for $d=10$\\
$71$ & & $72$ & $g\leq3$\\
$73$ & & $74$ & \ref{Fp_trigonal}, \ref{trigonalmap}\\
$75$ & $g\leq3$ & $76$ & $g\leq3$ for $d=19$, \ref{Fp_trigonal}\\
$77$ & $g\leq3$ for $d=7$, \ref{Fp_trigonal} & $78$ & \ref{quotienthyper}, \cite{FurumotoHasegawa1999} for $d=26$\\
$79$ & & $80$ & $g\leq3$ for $d=16$, \ref{Fp_trigonal}\\
$81$ & & $82$ & $g\leq3$ for $d=41$, \ref{Fp_trigonal}\\
$83$ & & $84$ & \ref{quotienthyper}\\
$85$ & \ref{Fp_trigonal} & $86$ & \ref{quotientell}, \ref{trigonalmap}\\
$87$ & \cite{FurumotoHasegawa1999} & $88$ & \ref{Fp_trigonal}, \ref{quotienthyper}\\
$89$ & & $90$ & \ref{quotienthyper}\\
$91$ & $g\leq3$ for $d=13$, \ref{Fp_trigonal} & $92$ & \cite{FurumotoHasegawa1999}\\
$93$ & \ref{Fp_trigonal}, \ref{quotienthyper} & $94$ & \cite{FurumotoHasegawa1999}\\
$95$ & \cite{FurumotoHasegawa1999} & $96$ & $g\leq3$ for $d=32$, \ref{quotientell}\\
$97$ & & $98$ & $g\leq3$ for $d=49$, \ref{Fp_trigonal}\\
$99$ & $g\leq3$ for $d=11$, \ref{quotientell} & $100$ & $g\leq3$ for $d=4$, \ref{Fp_trigonal}\\
$101$ & & $102$ & \ref{quotienthyper}, \ref{bettiprop}\\
$103$ & & $104$ & \ref{quotienthyper}\\
$105$ & \ref{quotientell}, \ref{quotienthyper} & $106$ & \ref{quotienthyper}\\
$107$ & & $108$ & \ref{Fp_trigonal}, \ref{trigonalmap}\\
$109$ & & $110$ & \ref{Fp_trigonal}, \ref{quotientell}, \ref{quotienthyper}\\
$111$ & \ref{quotientell} & $112$ & \ref{quotienthyper}, \ref{tetragonalmap}\\
$113$ & & $114$ & \ref{quotienthyper}, \ref{bettiprop}\\
$115$ & \ref{quotienthyper} & $116$ & \ref{quotienthyper}\\
$117$ & \ref{quotienthyper}, \cite{HasegawaShimura1999} for $d=13$ & $118$ & \ref{quotientell}\\
$119$ & \cite{FurumotoHasegawa1999} & $120$ & \ref{quotienthyper}, \ref{bettiprop}\\
$121$ & & $122$ & \ref{quotienthyper}\\
$123$ & \ref{quotientell} & $124$ & \ref{quotientell}\\
$125$ & & $126$ & \ref{quotienthyper}\\
$127$ & & $128$ & \\
$129$ & \ref{quotienthyper} & $130$ & \ref{quotienthyper}, \ref{bettiprop}\\
$131$ & & $132$ & \ref{Fp_gonality}, \ref{CSprop}, \ref{quotienthyper}\\
$133$ & \ref{Fp_trigonal}, \ref{quotienthyper} & $134$ & \ref{quotienthyper}\\
$135$ & \ref{quotienthyper} & $136$ & \ref{quotienthyper}\\
$137$ & & $138$ & \ref{CSprop}, \ref{quotienthyper}\\
$139$ & & $140$ & \ref{Fp2points_prop_quotient}, \ref{CSprop}, \ref{quotienthyper}, \ref{bettiprop}\\

  \hline
\end{tabular}\\
\vspace{5mm}
\caption{Methods used for $N\leq140$.}
\label{tab:main1}
\end{table}

\clearpage

\begin{table}[ht]
\centering
\begin{tabular}{|c|c||c|c||c|c||c|c|}
  \hline
  $N$ & Results used & $N$ & Results used & $N$ & Results used & $N$ & Results used\\
  \hline

$141$ & \ref{quotientell} & $142$ & \ref{quotientell} & $143$ & \ref{quotientell} & $144$ & \ref{tetragonalmap}\\
$145$ & \ref{Fp_trigonal}, \ref{quotientell} & $146$ & \ref{quotienthyper} & $147$ & \ref{quotienthyper}, \cite{HasegawaShimura1999} for $d=3$ & $148$ & \ref{tetragonalmap}, \ref{bettiprop}\\
$149$ & & $150$ & \ref{Fp_gonality}, \ref{CSprop}, \ref{quotienthyper} & $151$ & & $152$ & \ref{bettiprop}\\
$153$ & \ref{quotienthyper} & $154$ & \ref{Fp_gonality}, \ref{bettiprop} & $155$ & \ref{quotientell} & $156$ & \ref{CSprop}, \ref{quotienthyper}\\
$157$ & & $158$ & \ref{quotienthyper} &$159$ & \ref{quotientell} & $160$ & \ref{Fp_gonality}, \ref{tetragonalmap}\\
$161$ & \ref{quotienthyper} & $162$ & \ref{bettiprop} & $163$ & & $164$ & \ref{Fp_gonality}, \ref{bettiprop}\\
$165$ & \ref{Fp2points_prop_quotient}, \ref{Fp_gonality}, \ref{quotienthyper} & $166$ & \ref{quotienthyper} & $167$ & & $168$ & \ref{Fp_gonality}, \ref{quotienthyper}\\
$169$ & & $170$ & \ref{Fp_gonality} & $171$ & \ref{quotienthyper} & $172$ & \ref{Fp_gonality}, \ref{bettiprop}\\
$173$ & & $174$ & \ref{CSprop}, \ref{bettiprop} & $175$ & \ref{bettiprop} & $176$ & \ref{quotienthyper}\\
$177$ & \ref{Fp_trigonal}, \ref{quotienthyper} & $178$ & \ref{CSprop}, \ref{bettiprop} & $179$ & & $180$ & \ref{Fp_gonality}\\
$181$ & & $182$ & \ref{CSprop} & $183$ & \ref{CSprop}, \ref{bettiprop} & $184$ & \ref{quotienthyper}\\
$185$ & \ref{Fp_gonality}, \ref{bettiprop} & $186$ & \ref{Fp_gonality}, \ref{CSprop} & $187$ & \ref{bettiprop} & $188$ & \ref{Fp_trigonal}, \ref{quotientell}\\
$189$ & \ref{CSprop}, \ref{bettiprop} & $190$ & \ref{CSprop}, \ref{quotienthyper} & $191$ & & $192$ & \ref{Fp_gonality}\\
$193$ & & $194$ & \ref{CSprop} & $195$ & \ref{Fp2points_prop_quotient}, \ref{CSprop}, \ref{quotienthyper}, \ref{bettiprop} & $196$ & \ref{tetragonalmap}, \ref{bettiprop}\\
$197$ & & $198$ & \ref{Fp_gonality}, \ref{CSprop} & $199$ & & $200$ & \ref{Fp2points_prop_quotient}, \ref{Fp_gonality}\\
$201$ & \ref{Fp_gonality} & $202$ & \ref{CSprop}, \ref{bettiprop} & $203$ & \ref{CSprop}, \ref{bettiprop} & $204$ & \ref{Fp_gonality}, \ref{CSprop}\\
$205$ & \ref{quotienthyper} & $206$ & \ref{quotienthyper} & $207$ & \ref{quotienthyper} & $208$ & \ref{Fp2points_prop_quotient}, \ref{tetragonalmap}\\
$209$ & \ref{quotienthyper} & $210$ & \ref{Fp_gonality}, \ref{CSprop} & $211$ & & $212$ & \ref{Fp2points_prop_quotient}, \ref{Fp_gonality}\\
$213$ & \ref{quotienthyper} & $214$ & \ref{CSprop}, \ref{bettiprop} & $215$ & \ref{quotienthyper} & $216$ & \ref{Fp_gonality}\\
$217$ & \ref{CSprop}, \ref{tetragonalmap} & $218$ & \ref{Fp_gonality} & $219$ & \ref{Fp_gonality} & $220$ & \ref{Fp2points_prop_quotient}, \ref{CSprop}\\
$221$ & \ref{quotienthyper} & $222$ & \ref{CSprop} & $223$ & & $224$ & \ref{Fp2points_prop_quotient}, \ref{CSprop}\\
$225$ & \ref{Fp2points_prop_quotient}, \ref{bettiprop} & $226$ & \ref{Fp2points_prop_quotient}, \ref{Fp_gonality} & $227$ & & $228$ & \ref{Fp_gonality_quotient}\\
$229$ & & $230$ & \ref{CSprop} & $231$ & \ref{CSprop} & $232$ & \ref{Fp_gonality}\\
$233$ & & $234$ & \ref{Fp_gonality}, \ref{CSprop} & $235$ & \ref{Fp_gonality} & $236$ & \ref{CSprop}\\
$237$ & \ref{Fp2points_prop_quotient}, \ref{Fp_gonality} & $238$ & \ref{CSprop}, \ref{bettiprop} & $239$ & & $240$ & \ref{Fp_gonality}, \ref{Fp_gonality_quotient}\\
$241$ & & $242$ & \ref{Fp2points_prop_quotient} & $243$ & & $244$ & \ref{Fp_gonality}\\
$245$ & \ref{CSprop}, \ref{bettiprop} & $246$ & \ref{Fp_gonality_quotient}, \ref{CSprop} & $247$ & \ref{Fp_gonality} & $248$ & \ref{CSprop}, \ref{bettiprop}\\
$249$ & \ref{CSprop}, \ref{bettiprop} & $250$ & \ref{Fp_gonality}, \ref{CSprop} & $251$ & & $252$ & \ref{Fp_gonality}, \ref{CSprop}\\
$253$ & \ref{Fp_gonality} & $254$ & \ref{Fp2points_prop_quotient}, \ref{CSprop} & $255$ & \ref{Fp2points} & $256$ & \\
$257$ & & $258$ & \ref{Fp_gonality}, \ref{CSprop} & $259$ & \ref{Fp2points_prop_quotient}, \ref{CSprop} & $260$ & \ref{Fp2points}\\
$261$ & \ref{Fp2points_prop_quotient}, \ref{CSprop} & $262$ & \ref{CSprop}, \ref{bettiprop} & $263$ & & $264$ & \ref{Fp_gonality_quotient}\\
$265$ & \ref{Fp_gonality} & $266$ & \ref{Fp2points} & $267$ & \ref{CSprop}, \ref{bettiprop} & $268$ & \ref{Fp2points_prop_quotient}, \ref{Fp_gonality}\\
$269$ & & $270$ & \ref{Fp_gonality_quotient}, \ref{CSprop} & $271$ & & $272$ & \ref{Fp2points_prop_quotient}, \ref{CSprop}\\
$273$ & \ref{Fp_gonality}, \ref{Fp_gonality_quotient} & $274$ & \ref{Fp2points_prop_quotient}, \ref{CSprop} & $275$ & \ref{Fp2points_prop_quotient}, \ref{CSprop} & $276$ & \ref{Fp2points}\\
$277$ & & $278$ & \ref{Fp2points_prop_quotient}, \ref{CSprop} & $279$ & \ref{quotienthyper} & $280$ & \ref{Fp2points}\\
$281$ & & $282$ & \ref{Fp2points} & $283$ & & $284$ & \ref{quotienthyper}\\
$285$ & \ref{Fp2points} & $286$ & \ref{Fp2points} & $287$ & \ref{quotienthyper} & $288$ & \ref{Fp2points_prop_quotient}, \ref{Fp_gonality}\\
$289$ & & $290$ & \ref{Fp2points} & $291$ & \ref{Fp_gonality}, \ref{CSprop} & $292$ & \ref{Fp2points}\\
$293$ & & $294$ & \ref{ogg} & $295$ & \ref{CSprop}, \ref{bettiprop} & $296$ & \ref{Fp2points}\\
$297$ & \ref{Fp2points_prop_quotient}, \ref{CSprop} & $298$ & \ref{Fp2points_prop_quotient}, \ref{CSprop} & $299$ & \ref{quotienthyper} & $300$ & \ref{Fp_gonality_quotient}, \ref{CSprop}\\
    
  \hline
\end{tabular}\\
\vspace{5mm}
\caption{Methods used for $141\leq N\leq300$.}
\label{tab:main2}
\end{table}

\clearpage

\begin{table}[ht]
\centering
\begin{tabular}{|c|c||c|c||c|c||c|c|}
  \hline
  $N$ & Results used & $N$ & Results used & $N$ & Results used & $N$ & Results used\\
  \hline

$301$ & \ref{Fp2points_prop_quotient}, \ref{Fp_gonality} & $302$ & \ref{Fp2points_prop_quotient}, \ref{CSprop} & $303$ & \ref{CSprop} & $304$ & \ref{Fp2points}\\
$305$ & \ref{CSprop} & $309$ & \ref{Fp_gonality_quotient} & $319$ & \ref{CSprop} & $321$ & \ref{CSprop}\\
$323$ & \ref{Fp2points_prop_quotient}, \ref{CSprop} & $325$ & \ref{Fp2points_prop_quotient} & $329$ & \ref{CSprop} & $335$ & \ref{CSprop}\\
$341$ & \ref{CSprop} & $355$ & \ref{Fp2points_prop_quotient}, \ref{CSprop} & $371$ & \ref{CSprop} & $377$ & \ref{CSprop}\\
$391$ & \ref{CSprop} & $420$ & \ref{Fp2points_prop} & & & & \\

  \hline
\end{tabular}\\
\vspace{5mm}
\caption{Methods used for $N\geq300$. Levels $N$ eliminated by \Cref{Fp2points} and prime powers are omitted.}
\label{tab:main3}
\end{table}

\bibliographystyle{siam}
\bibliography{bibliography1}

\def\cprime{$'$} \def\cprime{$'$}
\begin{thebibliography}{10}

\bibitem{abramovich}
{\sc D.~Abramovich}, {\em A linear lower bound on the gonality of modular curves}, Internat. Math. Res. Notices,  (1996), pp.~1005--1011.

\bibitem{Bars99}
{\sc F.~Bars}, {\em Bielliptic modular curves}, J. Number Theory, 76 (1999), pp.~154--165.

\bibitem{BARS2020380}
{\sc F.~Bars, J.~González, and M.~Kamel}, {\em {Bielliptic quotient modular curves with $N$ square-free}}, J. Number Theory, 216 (2020), pp.~380--402.

\bibitem{bars22biellipticquotients}
{\sc F.~Bars, M.~Kamel, and A.~Schweizer}, {\em {Bielliptic quotient modular curves of $X_0(N)$}}, Math. Comp., volume 92, number 340, March 2023, pages 895-929,  (2022).

\bibitem{magma}
{\sc W.~Bosma, J.~Cannon, and C.~Playoust}, {\em The {Magma} algebra system. {I}: {The} user language}, J. Symb. Comput., 24 (1997), pp.~235--265.

\bibitem{Box19}
{\sc J.~{Box}}, {\em {Quadratic points on modular curves with infinite Mordell-Weil group}}, {Math. Comp.}, 90 (2021), pp.~321--343.

\bibitem{FurumotoHasegawa1999}
{\sc M.~Furumoto and Y.~Hasegawa}, {\em {Hyperelliptic Quotients of Modular Curves $X_0(N)$}}, Tokyo J. Math., 22 (1999), pp.~105 -- 125.

\bibitem{Green84}
{\sc M.~L. Green}, {\em Koszul cohomology and the geometry of projective varieties. {Appendix}: {The} nonvanishing of certain {Koszul} cohomology groups (by {Mark} {Green} and {Robert} {Lazarsfeld})}, J. Differ. Geom., 19 (1984), pp.~125--167, 168--171.

\bibitem{HasegawaShimura_trig}
{\sc Y.~Hasegawa and M.~Shimura}, {\em Trigonal modular curves}, Acta Arith., 88 (1999), pp.~129--140.

\bibitem{HasegawaShimura1999}
\leavevmode\vrule height 2pt depth -1.6pt width 23pt, {\em Trigonal modular curves {{\(X_0^{+d}(N)\)}}}, Proc. Japan Acad., Ser. A, 75 (1999), pp.~172--175.

\bibitem{HasegawaShimura2000}
\leavevmode\vrule height 2pt depth -1.6pt width 23pt, {\em Trigonal modular curves {{\(X_0^*(N)\)}}}, Proc. Japan Acad., Ser. A, 76 (2000), pp.~83--86.

\bibitem{HasegawaShimura2006}
\leavevmode\vrule height 2pt depth -1.6pt width 23pt, {\em Trigonal quotients of modular curves {{\(X_{0}(N)\)}}.}, Proc. Japan Acad., Ser. A, 82 (2006), pp.~15--17.

\bibitem{JEON2018319}
{\sc D.~Jeon}, {\em {Bielliptic modular curves $X_0^+(N)$}}, J. Number Theory, 185 (2018), pp.~319--338.

\bibitem{JeonPark05}
{\sc D.~Jeon and E.~Park}, {\em Tetragonal modular curves}, Acta Arith., 120 (2005), pp.~307--312.

\bibitem{KhawajaSiksek2023}
{\sc M.~Khawaja and S.~Siksek}, {\em Primitive algebraic points on curves}, Res. Number Theory, 10 (2024).

\bibitem{Kim2002}
{\sc H.~Kim}, {\em {Functoriality for the exterior square of $\textup{GL}_4$ and the symmetric fourth of $\textup{GL}_2$. Appendix 2: Refined estimates towards the Ramanujan and Selberg conjectures (by Henry Kim and Peter Sarnak)}}, J. Amer. Math. Soc., 16 (2002), p.~139–183.

\bibitem{NajmanOrlic22}
{\sc F.~Najman and P.~Orli{\'{c}}}, {\em {Gonality of the modular curve $X_0(N)$}}, Math. Comp.,  (2023).

\bibitem{NguyenSaito}
{\sc K.~V. Nguyen and M.-H. Saito}, {\em $d$-gonality of modular curves and bounding torsions}.
\newblock \url{https://arxiv.org/abs/alg-geom/9603024}, 1996.

\bibitem{Ogg74}
{\sc A.~P. Ogg}, {\em Hyperelliptic modular curves}, Bull. Soc. Math. France, 102 (1974), pp.~449--462.

\bibitem{OrlicTetragonalX0+}
{\sc P.~Orlić}, {\em {Tetragonal modular quotients $X_0^+(N)$}}.
\newblock \url{https://arxiv.org/abs/2311.09955}, 2023.

\bibitem{Poonen2007}
{\sc B.~Poonen}, {\em Gonality of modular curves in characteristic {{\(p\)}}}, Math. Res. Lett., 14 (2007), pp.~691--701.

\bibitem{Rouse_Sutherland_Zureick-Brown_2022}
{\sc J.~Rouse, A.~V. Sutherland, and D.~Zureick-Brown}, {\em {$\ell $ -adic images of Galois for elliptic curves over $\mathbb {Q}$ (and an appendix with John Voight)}}, Forum Math. Sigma, 10 (2022), p.~e62.

\bibitem{Schreyer1986}
{\sc F.-O. Schreyer}, {\em Syzygies of canonical curves and special linear series}, Mathematische Annalen, 275 (1986), pp.~105--137.

\bibitem{Stichtenoth09}
{\sc H.~Stichtenoth}, {\em Algebraic function fields and codes}, vol.~254 of Grad. Texts Math., Berlin: Springer, 2nd ed.~ed., 2009.

\bibitem{Zograf1987}
{\sc P.~G. Zograf}, {\em Small eigenvalues of automorphic laplacians in spaces of parabolic forms}, J. Sov. Math., 36 (1987), pp.~106--114.

\end{thebibliography}

\end{document}